\RequirePackage[l2tabu, orthodox]{nag}
\documentclass[a4paper,noamsfonts]{amsart}

\usepackage[T1]{fontenc}
\usepackage{fouriernc}
\usepackage[british]{babel}

\usepackage{amsmath}
\usepackage{amsthm}
\usepackage{amssymb}
\usepackage[foot]{amsaddr}
\usepackage{mathtools}
\mathtoolsset{mathic=true}

\usepackage{tikz}
\usetikzlibrary{cd}

\usepackage[dvipsnames]{xcolor}
\usepackage{graphicx}
\usepackage{pinlabel}

\usepackage{listings}
\usepackage{enumitem}
\usepackage[backend=biber, style=numeric, maxnames=8, isbn=false, sortcites=true, backref]{biblatex}
\AtEveryBibitem{\clearlist{language}}
\usepackage{hyperref}
\hypersetup{
    colorlinks,
    linkcolor={red!50!black},
    citecolor={blue!50!black},
    urlcolor={blue!80!black}
}

\usepackage{zref-clever}
\usepackage{microtype}

\zcsetup{noabbrev, cap}
\zcsetup{countertype={thm=theorem}}
\newtheorem{thm}{Theorem}
\numberwithin{thm}{section}
\newtheorem*{namedtheorem}{\theoremname}
\newcommand{\theoremname}{testing}
\newenvironment{named}[1]{\renewcommand{\theoremname}{#1}\begin{namedtheorem}}{\end{namedtheorem}}
\AddToHook{env/lem/begin}{\zcsetup{countertype={thm=lemma}}}
\newtheorem{lem}[thm]{Lemma}

\theoremstyle{definition}
\AddToHook{env/defn/begin}{\zcsetup{countertype={thm=definition}}}
\newtheorem{defn}[thm]{Definition}
\AddToHook{env/ex/begin}{\zcsetup{countertype={thm=example}}}
\newtheorem{ex}[thm]{Example}
\zcRefTypeSetup{cons}{
Name-sg = Construction ,
name-sg = construction ,
Name-pl = Constructions ,
name-pl = constructions ,
}
\AddToHook{env/cons/begin}{\zcsetup{countertype={thm=cons}}}
\newtheorem{cons}[thm]{Construction}
\newtheorem{prob}[thm]{Problem}
\AddToHook{env/prob/begin}{\zcsetup{countertype={thm=prob}}}
\zcRefTypeSetup{prob}{
Name-sg = Problem ,
name-sg = problem ,
Name-pl = Problems ,
name-pl = problems ,
}
\newtheorem*{idprob}{The Identification Problem}
\theoremstyle{remark}
\AddToHook{env/rem/begin}{\zcsetup{countertype={thm=remark}}}
\newtheorem{rem}[thm]{Remark}
\AddToHook{env/notation/begin}{\zcsetup{countertype={thm=notation}}}
\zcRefTypeSetup{notation}{
Name-sg = Notation ,
name-sg = notation ,
Name-pl = Notations ,
name-pl = notations ,
}
\newtheorem{notation}[thm]{Notation}

\newcommand{\df}{\textit}
\newcommand{\union}{\cup}

\newcommand{\mc}{\mathcal}

\newcommand{\mf}{\mathfrak}

\newcommand{\IN}{\mathbb{N}}
\newcommand{\IZ}{\mathbb{Z}}

\newcommand{\IC}{\mathbb{C}}
\newcommand{\IS}{\mathbb{S}}

\newcommand{\IP}{\mathbb{P}}
\newcommand{\IH}{\mathbb{H}}
\newcommand{\PSL}{\mathsf{PSL}}

\DeclareMathOperator{\tr}{tr}
\DeclareMathOperator{\Par}{Par}
\DeclareMathOperator{\ab}{\mathfrak{Ab}}

\DeclareMathOperator{\Aut}{Aut}

\begin{document}

\title[On rank two Kleinian groups with three parabolics]{On rank two Kleinian groups\\with three parabolics}
\author[A. Elzenaar]{Alex Elzenaar}
\address{School of Mathematics, Monash University, Melbourne}
\email{alexander.elzenaar@monash.edu}
\thanks{The author was supported by an Australian Government Research Training Program Scholarship during the period that this work was undertaken. This research was supported by Monash eResearch
capabilities, including the M3 HPC cluster. Part of this research was performed
while the author was visiting SLMath which is supported by NSF Grant No. DMS-2424139. He thanks Thomas Csizmadia, Connie On Yu Hui, Lavender Marshall, Gaven Martin, and Jessica Purcell
for helpful discussions. This document was prepared without the use of any generative AI}

\subjclass[2020]{20H10, 22E40, 30F40, 57K32}
\keywords{two-bridge links, Kleinian groups, Heegaard splittings, tunnel number one links, maximal cusp groups, identification of matrix groups}

\begin{abstract}
  The free group of rank $2$ is the fundamental group of the genus $2$ handlebody $\mc{H}$. We study discrete representations of this group into $ \PSL(2,\IC) $
  so that three disjoint simple closed curves on the conformal boundary $\partial_\infty \mc{H} $ are sent to parabolic elements. We show that the only infinite covolume groups of this
  form are maximal cusp groups on the boundary of genus $2$ Schottky space. We also exhibit hitherto unexpected finite covolume groups which do not arise
  from Heegaard splitting presentations of tunnel number $1$ links.
\end{abstract}

\maketitle

\section{Introduction}
The classification of discrete subgroups of $ \PSL(2,\IC) $ (called \df{Kleinian groups}) is an old problem dating back in its modern form to the 1970s~\cite{marden74}. In recent decades
most interest in this area has been directed towards the theory of ending laminations proposed by Thurston~\cite{thurston82}. The ending lamination theorem, proved by Brock,
Canary, and Minsky~\cite{brock12,minsky10} but building on work of many others, gives a complete set of topological invariants that can be used to classify Kleinian groups. It
is very useful for answering theoretical questions, but cannot easily be used to solve concrete classification problems. The prototypical such problem is:
\begin{idprob}
  Suppose $A_1,\ldots,A_k \in \PSL(2,\IC) $ are given such that $ G=\langle A_1,\ldots,A_k\rangle $ is discrete. Identify the
  hyperbolic isometry class of $ \IH^3/G $.
\end{idprob}
Checking that a group given in terms of generating matrices is discrete is also a very hard problem. Often with examples of matrix groups that arise in nature it is
possible to make an educated guess about discreteness by plotting the limit set, and then generally one will try to validate this guess by applying various methods
to solve the identification problem, or by attempting to find a $2$-generated indiscrete subgroup using J\o{}rgensen's inequality~\cite[Theorem~2.17]{matsuzaki_taniguchi}.

The most sophisticated techniques for attacking the identification problem for $ \PSL(2,\IC) $ exist in the special case that $ G = \langle A_1, A_2 \rangle $
where $ A_1 $ and $ A_2 $ are both parabolic matrices. Work in this area dates back to Riley's study of two-bridge link groups~\cite{riley72,riley75,riley79}.
The general picture was completed by Keen, Komori, and Series~\cite{keen94,komori98} who described the deformation space of free Kleinian groups generated by
two parabolic elements, and by Akiyoshi, Ohshika, Parker, Sakuma, and Yoshida~\cite{akiyoshi2020classification} who gave a complete description of all non-free Kleinian groups generated
by two parabolic elements. These results have been made computationally effective by recent work of Chesebro~\cite{chesebro25}
and Elzenaar, Martin, and Schillewaert~\cite{ems21} among others.

For more complicated groups there are only a few substantial computationally effective results. For instance, Riley considered semi-automated
discreteness testing for representations of more complicated link groups~\cite{riley82,riley83}. In addition there have been generalisations of Keen--Series theory to groups with
higher dimensional deformation spaces, like twice-punctured torus groups~\cite{series10}.

In this paper, we study the case of two generators $ X, Y \in \PSL(2,\IC) $. Since the free group on two generators is the fundamental group of the genus two
handlebody $ \mc{H} $, we can interpret words in $X$ and $Y$ as closed curves on the genus two surface $ \Sigma = \partial \mc{H} $. In the following definition,
recall that if $ f \in \PSL(2,\IC) $ satisfies $ \tr^2 f = 4 $ then either $ f $ is trivial or $ f $ is parabolic (has a unique fixed point on the Riemann sphere $ \IP^1 (\IC) $).
\begin{defn}
  Let $ \mc{X} $ be the character variety of representations $ F\{X,Y\} \to \PSL(2,\IC) $, where $ F \{X,Y\} $ is the
  free group on the symbols $ X $ and $ Y $. Suppose that $ \alpha $, $\beta$, and $\gamma $ are three distinct (up to homotopy)
  simple closed curves, all mutually disjoint, on $ \Sigma $; in other words, $ \{\alpha,\beta,\gamma\} $
  is a $2$-simplex in the curve complex of $ \Sigma $. Their \df{parabolic locus} is the set
  \begin{displaymath}
    \Par(\alpha,\beta,\gamma) = \{ \rho \in \mc{X} : \tr^2 \rho(\alpha) = \tr^2 \rho(\beta) = \tr^2 \rho(\gamma) = 4 \}.
  \end{displaymath}
\end{defn}

The fundamental problem with which we are concerned can now be stated. It is a special case of the identification problem for $ \PSL(2,\IC) $ where the matrix generators are drawn from
a subvariety of the matrix group.
\begin{prob}\label{prob:general}
  If $ \{\alpha,\beta,\gamma\} $ is a $2$-simplex in the curve complex of $ \Sigma $ so that none of the three curves are homotopically trivial in $ \mc{H} $,
  classify the discrete, non-elementary, non-Fuchsian representations in the parabolic locus $ \Par(\alpha,\beta,\gamma) $.
\end{prob}
In this note we discuss \zcref{prob:general} in the special case that all three curves are sent to parabolic elements rather than the identity.
Our work is directed by the interplay between tunnel number $1$ links and the system of equations defining the parabolic locus, so in \zcref{sec:motivation} we will
spend some time explaining the groups and other geometric objects that arise. We then summarise our main results in \zcref{sec:results}, which consist of (i) a complete
classification of all such groups which act discontinuously on an open subset of the Riemann sphere and (ii) an exhibition of examples of such groups which act ergodically on the sphere.
These examples show that some results from the two parabolic generators setting do not transfer cleanly to the general rank two case. In particular,
we previously believed that if $G$ is the image of a discrete representation where all three curves become parabolic then the domain of discontinuity of $G$ is non-empty
and $G$ is a maximal cusp group---this is true in particular when $ \alpha $ and $ \beta $ generate the group. We show that this is not true in general: if the domain of
discontinuity of $G$ is non-empty then $G$ is a maximal cusp (\zcref{thm:three_parabolics}), but there are discrete representations with \emph{lattice} images where the three curves become
parabolic. In fact, these lattices do not even arise naturally from tunnel number $1$ links. This shows that the structure theory for the parabolic locus must be more complicated than previously expected.

\subsection{Motivating constructions and examples}\label{sec:motivation}
There are two important constructions in Kleinian group theory and hyperbolic geometry which take a system of three disjoint curves $ \{\alpha,\beta,\gamma\} $ and
produce discrete, non-elementary, non-Fuchsian representations in $ \Par(\alpha,\beta,\gamma) $. These constructions furnish us with our prototypical examples of the groups which appear in our
study of \zcref{prob:general}.

\begin{cons}[Maximal cusp groups]
  Let $ \mc{S}_2 $ be genus two Schottky space, the space of holonomy representations of hyperbolic structures on a genus two handlebody $ \mc{H} $. It follows
  by classical results of Ahlfors, Bers, and Maskit that for every set of three disjoint simple closed $ \mc{H}$-incompressible curves $ \alpha, \beta, \gamma \subset \partial \mc{H} $
  there exists a representation $ \rho \in \Par(\alpha,\beta,\gamma) $ that lies on $ \partial \mc{S}_2 $ so that $ \rho(\alpha) $, $ \rho(\beta) $, and $ \rho(\gamma) $ are
  all parabolic~\cite[\S 4.3.2]{matsuzaki_taniguchi}. In fact, such representations are dense in the boundary~\cite{canary03}
  and are characterised by having circle-packing limit sets~\cite{keen91}.
\end{cons}

\begin{cons}[Holonomy groups of tunnel number one links]\label{cons:tunnum1}
  Let $ \mf{k} \subset \IS^3 $ be a tunnel number $1$ link~\cite{sakuma98}. This means, by definition, that there exists a $1$-cell $\tau $ disjoint from $ \mf{k}$ such that
  $ \IS^3 \setminus (\mf{k} \union \tau) $ is an unknotted genus two handlebody $\mc{H} $. It is classical that $ \pi_1(\IS^3 \setminus \mf{k} $) admits a presentation with two generators and one
  relator. There exist two meridian loops of $ \mf{k} $, say $\alpha $  and $\beta$, and a meridian loop of $ \tau $, say $\gamma$, such that the three loops are (isotopic to) simple closed curves
  on $ \Sigma = \partial \mc{H} $, and if $ \IS^3 \setminus \mf{k} $ is hyperbolic then the holonomy representation $ \rho : \pi_1(\IS^3 \setminus \mf{k}) \to \PSL(2,\IC) $ lies in
  $ \Par(\alpha,\beta,\gamma) $; here both $ \rho(\alpha) $, $ \rho(\beta) $ are parabolic but $\rho(\gamma) = 1$. This point of view is used implicitly in work of
  Morimoto, Sakuma, and Yokota~\cite{morimoto96} and of Cho and McCullough~\cite{cho09} (among others) on the classification of unknotting tunnels.
  This note arose partly as a byproduct of a search for effective tools to study unknotting tunnels from the point of view of Kleinian groups, with the goal of recovering information
  like that obtained by Cooper, Futer, and Purcell~\cite{cooper13} using Dehn filling techniques.
\end{cons}

With these constructions in hand, we consider a couple of examples that illustrate \zcref{prob:general}.

\begin{figure}\vspace{1em}
  \labellist
  \small\hair 2pt
  \pinlabel {$X^{-1}Y$} [b] at 498 165
  \pinlabel {link} [b] at 440 165
  \pinlabel {$X^{-2}Y^{-1}X^2Y$} [b] at 329 165
  \pinlabel {$Y$} [b] at 565 165
  \endlabellist
  \centering
  \includegraphics[width=\textwidth]{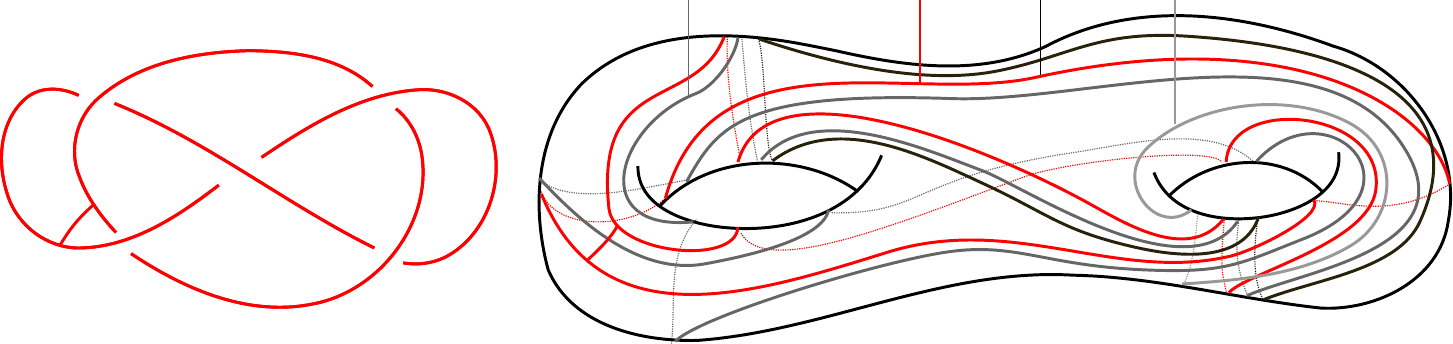}
  \caption{A simple embedding of the $\theta$-graph consisting of the Whitehead link and its unknotting tunnel.\label{fig:whitehead_short}}
\end{figure}

\begin{figure}
  \centering
  \includegraphics[width=.33\textwidth]{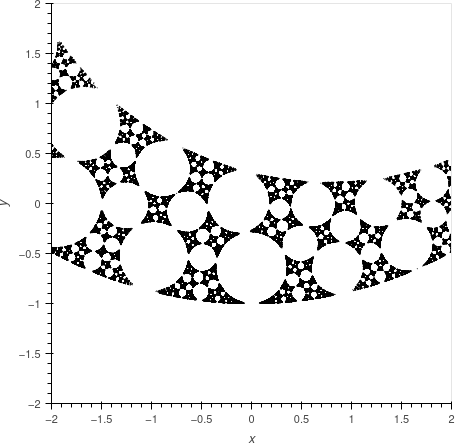}
  \caption{Limit set of the representation of \zcref{ex:whitehead_short}.\label{fig:whitehead_short_limit}}
\end{figure}
\begin{ex}\label{ex:whitehead_short}
  In \zcref{fig:whitehead_short}, we show the $\theta$-graph obtained by adjoining an unknotting tunnel to the Whitehead link, and an embedding of this graph onto the
  genus $2$ surface $\Sigma$. There is a system of three simple closed mutually disjoint curves on $ \Sigma $ so that each curve intersects exactly one arc of the $\theta$-graph;
  we refer to this as a \df{dual system of curves} to the $\theta$-graph. If $ \mc{H} $ is the exterior of $ \Sigma $ in $ \IS^3 $ and has handle core loops represented
  by $ X, Y \in \pi_1(\mc{H}) $, then the three curves have words
  \begin{displaymath}
    X^{-1} Y,\quad X^{-2}Y^{-1}X^2Y,\quad\text{and}\quad Y.
  \end{displaymath}
  There is exactly one representation $\rho$ of $ \langle X,Y \rangle $ (up to trivial symmetries like swapping $ \rho(X) $ with $ \rho(Y) $)
  where all these words have $\tr^2 = 4 $ and which does not have limit set contained in a circle. The plot of its limit set (\zcref{fig:whitehead_short_limit})
  confirms that this is a maximal cusp group.
\end{ex}

\begin{figure}\vspace{1em}
  \labellist
  \small\hair 2pt
  \pinlabel {$X$} [r] at 0 86
  \pinlabel {$Y^{-1}XYX^{-1}YXY^{-1}X^{-1}YX^{-1}Y^{-1}XY^{-1}X^{-1}YX$} [b] at 166 138
  \pinlabel {$Y$} [l] at 411 51
  \endlabellist
  \centering
  \includegraphics[width=.6\textwidth]{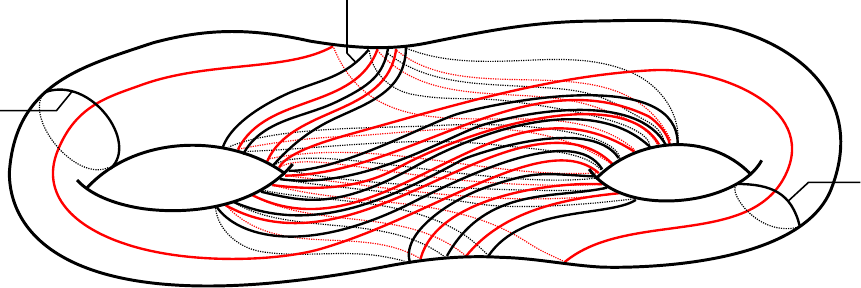}
  \caption{A braid embedding of the Whitehead link and a system of three dual curves. The central black curve is the $3/8$ curve on the four-holed sphere $ \Sigma \setminus \{X \cup Y\} $:
  it wraps $3$ times in the vertical direction and $8$ times horizontally.\label{fig:whitehead_farey}}
\end{figure}

\begin{figure}
  \centering
  \includegraphics[width=.32\textwidth]{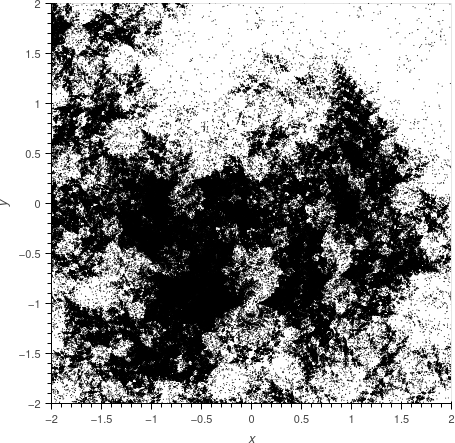}\hfill%
  \includegraphics[width=.32\textwidth]{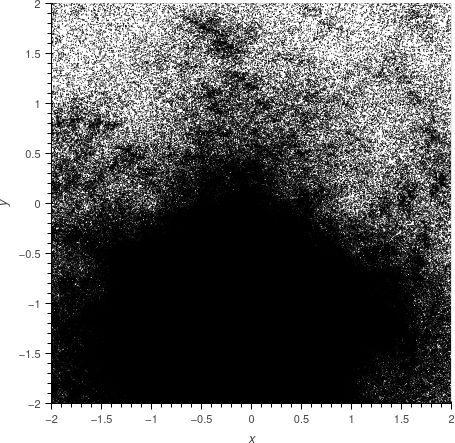}\hfill%
  \includegraphics[width=.32\textwidth]{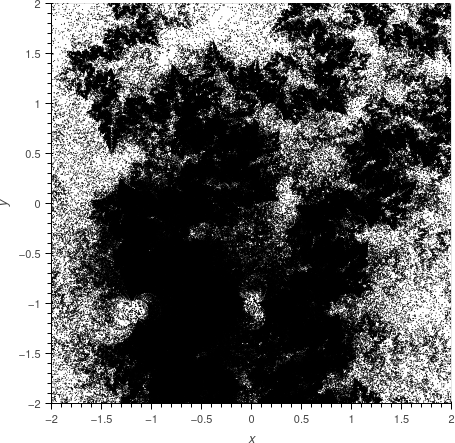}\\
  \includegraphics[width=.32\textwidth]{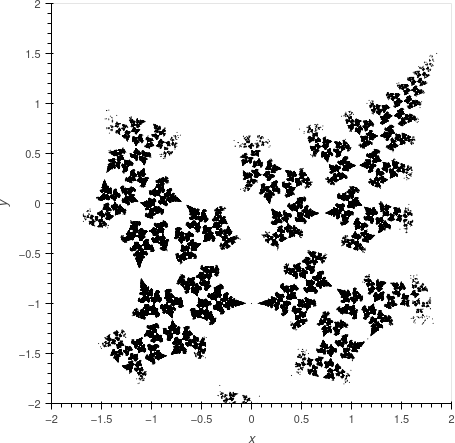}\hspace{.2cm}%
  \includegraphics[width=.32\textwidth]{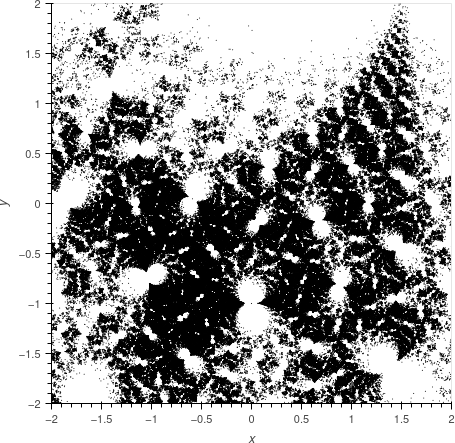}
  \caption{Limit sets of the representations of \zcref{ex:whitehead_farey}.\label{fig:whitehead_farey_limit}}
\end{figure}

\begin{ex}\label{ex:whitehead_farey}
  In \zcref{fig:whitehead_farey}, we consider a different embedding of the Whitehead link onto $ \Sigma $, as the curve of slope $3/8$ (c.f.~\cite[Definition 4.1]{chesebro25}). Again
  we can draw a dual curve system, this time with words
  \begin{displaymath}
    X,\quad Y^{-1}XYX^{-1}YXY^{-1}X^{-1}YX^{-1}Y^{-1}XY^{-1}X^{-1}YX,\quad\text{and}\quad Y.
  \end{displaymath}
  Now there are five different representations that do not have limit set contained in a circle, and we
  show their limit sets in \zcref{fig:whitehead_farey_limit}. The four on the top row appear to be
  indiscrete. On the other hand, the two representations with limit sets plotted in the bottom row
  are discrete: on the left is the maximal cusp, and on the right is the holonomy representation of
  the hyperbolic structure on the Whitehead link complement.
\end{ex}

\subsection{Summary of results}\label{sec:results}
The work in the case of two parabolic generators can be interpreted as giving a complete solution to \zcref{prob:general} in the case that the two words $ \alpha $ and $ \beta $
are the handle cores $X$ and $Y$ of $ \mc{H} $. In this case, the only discrete, non-elementary, non-Fuchsian representations in $\Par(\alpha,\beta,\gamma)$ are:
\begin{enumerate}
  \item[(P1)] Infinite covolume: the maximal cusp group on the boundary of Schottky space corresponding to pinching all three curves to parabolics, and
  \item[(P2)] Finite covolume: the holonomy group of $ \IS^3\setminus\mf{b} $ for a two-bridge link $ \mf{b} \subset \IS^3 $ obtained by gluing a $2$-handle
              to $ \mc{H} $ along $\gamma$, or the holonomy group of a quotient of $ \IS^3\setminus\mf{b} $ by some subgroup of $ \Aut(\mf{b}) $.
\end{enumerate}

Here, we consider the discrete, non-elementary, non-Fuchsian representations $ \rho \in \Par(\alpha,\beta,\gamma)$ such that $ \rho(\alpha) $, $ \rho(\beta) $, and $ \rho(\gamma) $
are all parabolic. We first show that the expected generalisation of (P1) holds in the full rank $2$ setting.

\begin{named}{\zcref{thm:three_parabolics}}
  Suppose that $ \rho \in \Par(\alpha,\beta,\gamma) $ is discrete, non-elementary, and non-Fuchsian, and let $ \Gamma = \rho(\pi_1(\mc{H})) $. If the domain of
  discontinuity $ \Omega(\Gamma) \subset \IP^1(\IC) $ is non-empty, and all of $ \rho(\alpha) $, $ \rho(\beta)$, and $\rho(\gamma) $ are parabolic, then $ \Gamma $
  is a maximally cusped group on the boundary of genus $2$ Schottky space.
\end{named}

Experiments with simple examples, like those above in \zcref{sec:motivation}, might suggest that the parabolic locus of a triplet of curves always consists of exactly one maximal cusp
group up to conjugacy, and at most one additional family of finite covolume groups that come from the finite quotients of a tunnel number $1$ link (generalising the two-bridge link case). In particular,
one might conjecture that every finite covolume group is the holonomy group of an orbifold obtained by gluing a $2$-handle onto the genus $2$ surface along one
of the three curves (so that the curve will lie in the kernel of the representation) and then taking a quotient by the automorphism group of the result, as in (P2) of the parabolic
setting. Indeed, our conversations with other researchers suggest that the following disproof of this conjecture is genuinely surprising.

\begin{named}{\zcref{thm:85link_identification}}
  There exists a triplet of curves $ \{\alpha,\beta,\gamma\} $ on the genus two surface such that $ \Par(\alpha,\beta,\gamma) $ contains a finite covolume representation
  which sends all of $ \alpha $, $ \beta $, and $ \gamma $ to parabolic elements and which is not virtually a tunnel number one link group.
\end{named}

Actually, our main example is a $3$-fold quotient of the Borromean rings, and is striking because the system of curves involved
has no order three symmetry---the system arises from an embedding of the $ 8_5 $ knot onto the genus two surface and has symmetry group $ \IZ/2\IZ \oplus \IZ/2\IZ $.

At the end of the paper we also include an example (\zcref{thm:1046_group}) to show that there can exist discrete groups which lie in intersections of the parabolic loci
corresponding to distinct triplets of curves. This is also a phenomenon that cannot occur in the setting of groups generated by two parabolics (the extensions of the Keen--Series
pleating rays do not collide) but unlike \zcref{thm:85link_identification} it was expected that this could occur, even though no example seems to appear in the literature.

\section{Infinite covolume groups are maximal cusps}
We will require three useful topological facts. We first introduce all the language which will appear in the statements of these facts.
\begin{notation}\label{not:hypotheses}
  Let $ \Gamma $ be a finitely generated non-elementary Kleinian group, let $ M = \IH^3/\Gamma $,
  and let $ N $ be the manifold obtained by removing regular neighbourhoods of every orbifold singularity from $M$. Let $ K $ be an irreducible
  core for $ N $, so in particular $ K $ is a compact $3$-manifold with boundary and $K \hookrightarrow N $ is a homotopy equivalence. Let $ \tau $
  be the number of cusp cylinders, cusp tubes, and orbifold singularities (both ideal arcs and loops) in $ \IH^3/\Gamma $. Finally, if $G$ is a
  group we write $ r(G) $ for the rank of $ G $, and $ \ab(G) $ for the abelianisation $ G/[G,G] $.
\end{notation}

The germinal version of the following result was due to Bers~\cite{bers67}.
\begin{lem}\label{thm:bers_area_formula_1}
  With the setup of \zcref{not:hypotheses},
  $ -\chi(\partial K) \leq 2(r(\Gamma)-1) $, with a strict inequality if $ K$ is not a handlebody.
\end{lem}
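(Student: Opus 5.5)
Since $K \hookrightarrow N$ is a homotopy equivalence, $\pi_1(K) \cong \pi_1(N)$. I will compare $\chi(\partial K)$ with the rank of $\pi_1(K)$, and then compare that rank with $r(\Gamma)$.

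The first step is the standard half-lives-half-dies identity: for a compact orientable $3$-manifold $K$, $\chi(\partial K) = 2\chi(K)$. Since $K$ is a compact $3$-manifold with non-empty boundary, it deformation retracts onto a $2$-complex. Choose a handle decomposition with a single $0$-handle, $g$ one-handles and $t$ two-handles, where $g$ is the number of generators of the corresponding presentation of $\pi_1(K)$. Then $\chi(K) = 1 - g + t \geq 1 - g$. Choosing the decomposition so that $g = r(\pi_1(K))$, which is possible for irreducible $K$ with boundary by taking a minimal-genus Heegaard-type spine, gives
\[ -\chi(\partial K) = -2\chi(K) \leq 2(r(\pi_1 K) - 1). \]
Equality forces $t = 0$, so $K$ is a handlebody of genus $g$. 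Conversely, when $K$ is not a handlebody the inequality is strict. The cleanest argument for strictness: if $\chi(K) = 1 - r(\pi_1 K)$ then the deficiency considerations together with irreducibility show that $\pi_1(K)$ is free. An irreducible compact $3$-manifold with free fundamental group and non-empty boundary is a handlebody, by the loop theorem/Dehn's lemma together with the Kneser--Haken finiteness argument.

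The second step passes from $\pi_1(N)$ to $\Gamma$. $N$ is $M$ with neighbourhoods of the singular locus removed, so $\pi_1^{\mathrm{orb}}(M) = \Gamma$ is a quotient of $\pi_1(N)$ obtained by killing powers of meridians of the singular locus. This only gives $r(\Gamma) \leq r(\pi_1 N)$, which is the wrong direction. To get the bound in terms of $r(\Gamma)$, I would instead work with a torsion-free finite-index subgroup via Selberg's lemma, or, more directly, invoke Bers' original argument via the area formula: the boundary of the core corresponds to the conformal boundary together with the cusp and singular contributions, and $\chi$ is bounded by the number of generators of $\Gamma$. When $\Gamma$ is torsion-free, $N = M$ and $\pi_1 N = \Gamma$, and the first step finishes the proof.

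The main obstacle is the torsion case: controlling $-\chi(\partial K)$ by $r(\Gamma)$ rather than by $r(\pi_1 N)$. My plan is as follows.
\begin{enumerate}
  \item Write $\partial K$ as the union of the compactified conformal boundary components and the boundary tori or annuli around the cusps and singular arcs or loops.
  \item Note that the tori and annuli contribute zero to $\chi$.
  \item Observe that the remaining surfaces are the punctured orbifold boundary, whose Euler characteristic is bounded by Ahlfors--Bers area estimates: $\sum$ area of $\Omega/\Gamma \leq 4\pi(r(\Gamma)-1)$.
\end{enumerate}
Strictness then follows as in the first step, because equality in Bers' inequality characterises Schottky-type (handlebody) groups.
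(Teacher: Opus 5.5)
Step~2 is a genuine gap, and you located it correctly. But neither repair you sketch can close it, because with torsion the inequality as stated is false.

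Let $\Gamma=\langle a,b,c\mid a^2,b^2,c^2,(abc)^3\rangle$ be a cocompact Fuchsian group of signature $(0;2,2,2,3)$. From $(abc)^3=1$ you get $abc=(cba)^2=(cb)(ac)(ba)$, and $ac=(ab)(bc)$, so $abc\in\langle ab,bc\rangle$. Hence $a=(abc)(bc)^{-1}$ lies in $\langle ab,bc\rangle$, so do $b$ and $c$, and $r(\Gamma)=2$. On the other hand $M=(\IH^2/\Gamma)\times\mathbb{R}$ has four singular lines, so $N$ is a four-holed sphere times $\mathbb{R}$ and $K$ is a genus-$3$ handlebody. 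Thus $-\chi(\partial K)=4>2=2(r(\Gamma)-1)$; a quasi-Fuchsian deformation gives a non-Fuchsian example.

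The paper gets past this point by asserting $r(\ab\pi_1(N))=r(\ab\pi_1(M))$, on the grounds that each $W_i^{k_i}$ is imprimitive. Here, however, $\ab\pi_1(N)\cong\IZ^3$ while $\ab\Gamma\cong(\IZ/2\IZ)^2$: killing several imprimitive classes at once can lower the rank.

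Your two suggestions fail for a related reason. Bers' area inequality, and likewise Selberg's lemma combined with Schreier's bound $r(\Gamma')-1\le d(r(\Gamma)-1)$ and multiplicativity of Euler characteristic, control an \emph{orbifold} Euler characteristic. There a cone point of order $k$ contributes $1-1/k$, whereas it contributes $1$ to $-\chi(\partial K)$. Bers' inequality also does not see boundary components of $K$ facing degenerate ends. So the lemma at least needs a torsion-free hypothesis, and the natural torsion analogue replaces $\chi(\partial K)$ by an orbifold Euler characteristic.

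In the torsion-free case, your Step~1 still has two unsupported steps.

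First, choosing a handle decomposition with exactly $r(\pi_1K)$ one-handles is a rank-equals-Heegaard-genus claim, which is not available in general (rank and Heegaard genus are known to differ). You do not need it. As in the paper, $\partial K\ne\emptyset$ gives $\chi(K)=1-\beta_1+\beta_2\ge 1-\beta_1\ge 1-r(\pi_1K)$, and doubling gives $\chi(\partial K)=2\chi(K)$.

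Second, for strictness, equality only tells you $\beta_2(K)=0$ and $\beta_1(K)=r(\pi_1K)$, i.e.\ $\partial K$ is connected of genus $r(\pi_1K)$. ``Deficiency considerations'' do not get you from there to $\pi_1K$ being free, so some real input is needed. The paper's own justification (``$\beta_2=0$ if and only if $K$ is a handlebody'') does not supply it either, since a hyperbolic knot exterior has $\beta_2=0$. What the Betti-number computation does give directly is strictness whenever $\partial K$ is disconnected, because $\beta_2(K)\ge(\#\text{components of }\partial K)-1$.
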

\begin{proof}
  Our proof is adapted from Lemma~4.7 of Matsuzaki and Taniguchi~\cite{matsuzaki_taniguchi}, who deal with the torsion-free case. Let $ DK $ be
  the double of $K $ across its boundary, so $ DK $ is a closed $3$-manifold. Then $ 0 = \chi(DK) = 2\chi(K) - \chi(\partial K) $.
  Let $ \beta_i $ denote the $i$th Betti number of $ K $, so $ \chi(K) = 1 - \beta_1 + \beta_2 - 0 $ and $ \beta_1(K) = r(\ab (\pi_1(N)))$. We therefore have
  \begin{displaymath}
    -\chi(\partial K) = -2\chi(K) = 2\beta_1 - 2 - 2\beta_2 \leq 2\beta_1 - 2 =  2(r(\ab(\pi_1(N))) - 1)
  \end{displaymath}
  Here $ \beta_2 = 0 $ if and only if $K$ is a handlebody.

  Now we claim that $ r( \ab(\pi_1(N))) \leq r(\pi_1(M)) $, which will complete the proof. First note that $ M $ is obtained from $ N $ by gluing in
  finitely many singular discs and tori, since a finitely generated Kleinian group has only finitely many distinct conjugacy classes of maximal elliptic subgroups:
  indeed, a finitely generated orbifold group has at most finitely many closed singular loops, and by a version of Ahlfors' finiteness theorem for torsion
  groups~\cite[Theorem~4.1$'$]{matsuzaki_taniguchi} there are also only finitely many ideal singular arcs landing on conformal ends.

  By the Seifert--van Kampen theorem for orbifolds~\cite[{III.$\mc{G}$, 3.10(4)}]{bridson_haefliger}, this implies that $ \pi_1(M) $ is obtained from $ \pi_1(N) $ by a finite sequence of quotients by relations
  of the form $ W_i^{k_i} = 1 $. Passing to the abelianisations, we find that $ \ab(\pi_1(M)) $ is a quotient of $ \ab(\pi_1(N)) $
  by such cyclic groups. By the classification of finitely generated abelian groups, since $  \ab(\pi_1(N)) $ does not contain any finite order
  elements it is of the form $ \IZ^\ell $ for some $ \ell $. A quotient of $ \IZ^\ell $ has a lower rank if and only if the quotient is by one of the factor groups.
  This will happen if and only if one of the words $ W_i^{k_i} $ can be extended to a homology basis of $ N $; but this is not possible since $ W_i^{k_i} $ is imprimitive
  when $ k_i > 1 $. Thus
  \begin{equation}\label{eq:first_betti}
    r(\ab(\pi_1(N))) = r(\ab(\pi_1(M))) \leq r(\pi_1(M))
  \end{equation}
  as desired.
\end{proof}

\begin{lem}\label{thm:bers_area_formula_2}
  With the setup of \zcref{not:hypotheses}, $ \tau \leq 3(r(\Gamma) - 1) $.
\end{lem}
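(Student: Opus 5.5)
The plan is to bound $\tau$ by the number of pairwise disjoint, pairwise non-parallel, essential annuli and tori that these features cut out on $\partial K$, and then to control that number by $-\chi(\partial K)$ using \zcref{thm:bers_area_formula_1}.

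First we convert each object counted by $\tau$ into a curve or torus on $\partial K$. A cusp cylinder (rank one cusp) in $N$ meets the core $K$ in an annulus on $\partial K$; we choose $K$ to be a relative compact core with respect to the parabolic locus, which McCullough's relative core theorem allows. A rank two cusp gives a torus component of $\partial K$. Removing a regular neighbourhood of an ideal singular arc gives a boundary annulus, and removing one of a closed singular loop gives a boundary torus. We may take all of these to lie in $\partial K$ as disjoint annuli and tori. Discreteness, via the standard structure of cusps and maximal elliptic subgroups, gives that distinct features yield non-homotopic core curves in $N$. Hence the annuli have pairwise non-parallel, essential core curves on the non-torus components of $\partial K$.

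Next, write $\partial K = T \sqcup S$, where $T$ is the union of the torus components and $S$ the union of the higher genus components. A standard count shows that a collection of disjoint, pairwise non-parallel, essential simple closed curves on a closed surface $S_g$ with $g \ge 2$ has at most $3g - 3 = -\tfrac{3}{2}\chi(S_g)$ members. Thus the number of annular features is at most $-\tfrac{3}{2}\chi(S) = -\tfrac{3}{2}\chi(\partial K)$. By \zcref{thm:bers_area_formula_1}, this is at most $3(r(\Gamma) - 1)$.

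The main obstacle is the torus components, since they contribute nothing to $\chi(\partial K)$. I would handle them by a sharper homological count. Each torus component of $\partial K$ contributes, by the half-lives-half-dies theorem, to $\beta_1(K)$ beyond what $\chi(\partial K)$ accounts for. Concretely, $\beta_1(K) \ge \tfrac12\beta_1(\partial K) = g(S) + |T|$, where $g(S)$ is the total genus of $S$. Combining this with \eqref{eq:first_betti} gives $r(\Gamma) \ge g(S) + |T|$. Since $\tau \le (3g(S) - 3\#S) + |T|$, we get
\[
  \tau \le 3g(S) + |T| - 3 \le 3\bigl(g(S) + |T|\bigr) - 3 \le 3(r(\Gamma) - 1),
\]
at least when $S \neq \emptyset$. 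In the remaining case $S = \emptyset$ there are no annuli, so $\tau = |T| \le r(\Gamma)$, and this is at most $3(r(\Gamma) - 1)$ because $r(\Gamma) \ge 2$ for a non-elementary group.

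I expect the delicate points to be the justification that distinct features give non-parallel curves, particularly for singular arcs ending on conformal boundary, and checking that the relative core can be chosen compatibly with the orbifold excision.
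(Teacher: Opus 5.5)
Your argument is correct, but it takes a different route from the paper. The paper does no counting of its own. It applies Theorem~4.9 of Matsuzaki--Taniguchi to $N$ and $K$ as a black box, which gives $\tau \le 3(\beta_1(K)-1)-\beta_2(K)$; it then drops the $\beta_2$ term and invokes \eqref{eq:first_betti}.

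You instead prove the count by hand:
\begin{itemize}
  \item a relative compact core turns each feature counted by $\tau$ into an annulus or torus on $\partial K$;
  \item distinct features have non-conjugate parabolic or elliptic images, so they give non-parallel essential curves;
  \item the $3g-3$ bound controls the annuli;
  \item half-lives-half-dies (already used in \zcref{lem:rank_bound}) gives $\beta_1(K)\ge g(S)+|T|$, which absorbs the tori directly rather than through $\beta_2$ and the number of boundary components.
\end{itemize}
Only the final step, $\beta_1(K)\le r(\Gamma)$ via \eqref{eq:first_betti}, is common to both arguments.

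The paper's citation buys brevity and a formally sharper intermediate bound (with the $-\beta_2(K)$ term), which it then discards. Your version buys transparency. A theorem about cusps of Kleinian groups is being applied to the excised orbifold $N$, with singular arcs and loops counted alongside cusps. The paper leaves implicit why that is legitimate, and your argument makes it explicit: it is exactly the relative-core and non-parallelism points you flag as delicate.

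Two small simplifications are available to you. Your intermediate appeal to \zcref{thm:bers_area_formula_1} for the annuli alone is redundant once you have the sharper count in the torus paragraph. You also do not need the claim that no annulus lies on a torus component of $\partial K$: a torus carries at most one isotopy class of disjoint essential curves, so each component of $T$ accounts for at most one feature either way, and $\tau \le 3g(S)-3\#S+|T|$ still holds.
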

\begin{proof}
  Applying Theorem~4.9 of Matsuzaki and Taniguchi~\cite{matsuzaki_taniguchi} to $N$ and $K$,
  \begin{align*}
    \tau &\leq 3(-1 + \beta_1(K) - \beta_2(K)) + 2\beta_2(K)\\
         &= 3(\beta_1(K) - 1) - \beta_2(K)\\
         &\leq 3(\beta_1(K) - 1).
  \end{align*}
  The result now follows from \eqref{eq:first_betti}.
\end{proof}

\begin{lem}\label{lem:rank_bound}
  With the setup of \zcref{not:hypotheses}, if $n$ is the number of ends of $M$ including singular loops, then $ r(\Gamma) \geq n $.
\end{lem}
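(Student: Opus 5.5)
We want to show that $r(\Gamma) \geq n$, where $n$ counts the ends of $M$ together with its closed singular loops. The plan is to produce $n$ independent free generators in the abelianisation of $\pi_1(N)$, or at least $n$ independent $\IZ$-summands. We then transfer the count to $r(\Gamma)$ using \eqref{eq:first_betti}, that is, $r(\ab(\pi_1(N))) = r(\ab(\pi_1(M))) \leq r(\Gamma)$.

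First we set up the topology. Since $N$ is obtained from $M$ by removing regular neighbourhoods of the singular locus, each closed singular loop of $M$ gives rise to a torus boundary component of $N$. Each end of $M$ (geometrically finite conformal end, cusp, or degenerate end) gives an end of $N$, and by the core theorem each corresponds to a component of $\partial K$; after relative compactification, each cusp corresponds to an annulus or torus in $\partial K$. So we have a compact irreducible $3$-manifold $K$, homotopy equivalent to $N$, whose boundary has at least $n$ components. By this we mean distinct components $S_1,\dots,S_m$ with $m\ge n$ when each end and each singular loop is counted separately. If cusps sit on the same boundary surface, we instead count the distinct surfaces and note that they are no fewer than the ends.

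Next comes the standard half-lives-half-dies argument: for a compact orientable $3$-manifold $K$, the image of $H_1(\partial K;\mathbb{Q})\to H_1(K;\mathbb{Q})$ has dimension $\tfrac12\dim H_1(\partial K;\mathbb{Q})$. Since no component of $\partial K$ is a sphere ($K$ is irreducible and $\Gamma$ is non-elementary), each component $S_i$ has genus at least $1$. The long exact sequence of the pair, together with Poincaré–Lefschetz duality, then gives $\beta_1(K)\geq \tfrac12\sum_i \beta_1(S_i)\geq \sum_i g(S_i)\ge m\ge n$. Alternatively one can use the bound from the proof of \zcref{thm:bers_area_formula_1}: $\beta_1(K) \ge 1-\tfrac12\chi(\partial K) = 1+\sum_i (g_i-1)$. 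However, this gives only $1+\sum(g_i-1)$, which is weaker when tori are present, so the half-lives-half-dies route is the one to use.

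Finally, $\beta_1(K)=r(\ab(\pi_1(N)))$ since $K\simeq N$, so \eqref{eq:first_betti} yields $r(\Gamma)\geq \beta_1(K)\geq n$.

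The main obstacle is the bookkeeping in the first step: checking that the ends of $M$ and its singular loops really correspond injectively to genus $\geq 1$ components of $\partial K$. Cusp tubes and ideal singular arcs need care here, since ideal arcs give annular rather than closed pieces. I would treat cusps and arcs as marking annuli on existing boundary surfaces and count only conformal ends, degenerate ends, rank-$2$ cusps and closed singular loops as the $n$ ends. If the paper's definition of ``end'' is coarser, the inequality only becomes easier.
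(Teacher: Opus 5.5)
Your proposal is correct and is essentially the paper's own proof. Both apply half-lives-half-dies to the core $K$ and use that every component of $\partial K$ has genus at least $1$, so that $r(H_1(\partial K))\geq 2n$ and hence $r(H_1(K))\geq n$, then pass to $r(\Gamma)$ via \eqref{eq:first_betti}; the matching of ends and singular loops to components of $\partial K$ that you flag as bookkeeping is likewise taken for granted there.
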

\begin{proof}
  We apply the `half lives, half dies' theorem\footnote{I thank Moishe Kohan for sketching this result on Mathematics
  StackExchange, \url{https://math.stackexchange.com/q/5126504/736021}.}~\cite[Lemma~3.5]{hatcher3M} to $K$ to find that in the exact sequence
  \begin{displaymath}\begin{tikzcd}
    H_2(K, \partial K) \arrow[r, "\partial"] & H_1(\partial K) \arrow[r, "i"] & H_1(K)
  \end{tikzcd}\end{displaymath}
  the rank of $ \operatorname{im}(\partial) $ is half of $ r(H_1(\partial K)) $; i.e.\ $ r(\ker(i)) = r(H_1(\partial K))/2 $
  and thus $ r(H_1(K)) \geq r(H_1(\partial K))/2 $. In addition, $ r(H_1(\partial K)) \geq 2n $ since every boundary component of $K$
  has genus at least $1$. Combining this with \eqref{eq:first_betti} we find that
  \begin{displaymath}
   n \leq r(H_1(K)) = r(\ab(\pi_1(M))) \leq r(\pi_1(M))
  \end{displaymath}
  as desired.
\end{proof}

We may prove our first main theorem:
\begin{thm}\label{thm:three_parabolics}
  Suppose that $ \rho \in \Par(\alpha,\beta,\gamma) $ is discrete, non-elementary, non-Fuchsian, and let $ \Gamma = \rho(\pi_1(\mc{H})) $. If $ \Omega(\Gamma) \neq \emptyset $, and
  all of $ \rho(\alpha) $, $ \rho(\beta)$, and $\rho(\gamma) $ are parabolic, then $ \Gamma $ is a maximally cusped group on the boundary of genus $2$ Schottky space.
\end{thm}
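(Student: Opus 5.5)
The plan is to show that $\Gamma$ is a geometrically finite group whose conformal boundary is a union of thrice-punctured spheres, with exactly three rank one cusps coming from $\alpha$, $\beta$, $\gamma$, and that the underlying manifold is a genus two handlebody. That is the standard characterisation of maximal cusps on $\partial\mc{S}_2$.

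\textbf{Step 1: rank bounds.} Since $\Gamma$ is a quotient of $F\{X,Y\}$ and non-elementary, $r(\Gamma)=2$. So \zcref{thm:bers_area_formula_1} gives $-\chi(\partial K)\le 2$, with strict inequality unless $K$ is a handlebody, and \zcref{thm:bers_area_formula_2} gives $\tau\le 3$.

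\textbf{Step 2: torsion-freeness and geometric finiteness.} The parabolics $\rho(\alpha)$, $\rho(\beta)$, $\rho(\gamma)$ should give three distinct cusps, since they come from disjoint, pairwise non-homotopic curves. Because $\Omega(\Gamma)\neq\emptyset$, Ahlfors finiteness and the Bers inequality give a nonempty conformal boundary of finite type. I would then argue that these three cusps use up the whole budget $\tau\le 3$. It follows that there are no orbifold singularities, so $\Gamma$ is torsion-free and $N=M$, and that there are no further cusps.

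This step is where the care lies. One must show that the three parabolics are pairwise non-conjugate in $\Gamma$, and that none of them is a power of a common parabolic. I would use that they are disjoint simple curves on $\partial\mc{H}$ which remain nontrivial, together with the fact that in a rank two free quotient a coincidence would force an extra relation. That relation would make $\Gamma$ elementary or lower the rank, so I would argue it is impossible. I expect this to be the main obstacle, since $\rho$ need not be faithful a priori.

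\textbf{Step 3: handlebody and faithfulness.} Each rank one cusp contributes to $\partial K$ a pair of annuli, which pinch the boundary surfaces. The conformal boundary $\Omega/\Gamma$ then has total $-\chi$ at most $-\chi(\partial K)\le 2$. It also carries three pairs of punctures (six punctures), and a nonempty hyperbolic surface with these punctures has $-\chi\ge 2$. So equality holds throughout.

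Equality in \zcref{thm:bers_area_formula_1} forces $K$ to be a handlebody. It has genus two, since $\beta_1(K)\le 2$ and $-\chi(\partial K)=2$. Therefore $\pi_1(M)$ is free of rank two, and $\rho$, a surjection $F_2\to F_2$, is an isomorphism by Hopfian-ness. Moreover, the conformal boundary consists of exactly two thrice-punctured spheres, the complement of $\alpha\cup\beta\cup\gamma$ in $\Sigma$.

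\textbf{Step 4: conclusion.} Thrice-punctured spheres are rigid, so there is no mixed or degenerate end. By Tameness with the covering theorem, or more simply by the fact that any totally degenerate end would carry additional $-\chi$, $\Gamma$ is geometrically finite. It is therefore a faithful geometrically finite representation of $\pi_1(\mc{H})$ in which $\alpha$, $\beta$, $\gamma$ are exactly the accidental parabolics and the boundary is totally pinched. By the Ahlfors--Bers--Maskit description recalled in the maximal cusp construction, this is the maximal cusp on $\partial\mc{S}_2$ obtained by pinching $\alpha$, $\beta$, $\gamma$. The non-Fuchsian hypothesis excludes the degenerate case where the limit set is a circle.
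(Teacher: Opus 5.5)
Your proposal has a genuine gap at Step~2, and everything after it depends on that step. Unless $\rho(\alpha)$, $\rho(\beta)$, $\rho(\gamma)$ lie in three pairwise non-conjugate maximal parabolic subgroups, the bound $\tau\le 3$ rules out neither torsion nor extra cusps, and the count of six boundary circles in Step~3 is unavailable. You give no argument, and the mechanism you sketch cannot supply one. Suppose $\rho(\alpha)$ and $\rho(\beta)$ lie in conjugates of a single cyclic parabolic subgroup. Then all you obtain is a relation $\rho(\alpha)^{q}=g\,\rho(\beta)^{p}g^{-1}$. This does not make $\Gamma$ elementary, and $r(\Gamma)=2$ for every non-elementary two-generator group, so the rank does not drop.

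What can drop is the first Betti number, and that is the paper's route (its case~2(b)). Both classes map into one cyclic subgroup of $\ab(\Gamma)$. So if $[\alpha]$ and $[\beta]$ are independent in $H_1(\mc H;\mathbb{Q})$, then $\ab(\Gamma)$ has torsion-free rank at most $1$. The computation behind \zcref{thm:bers_area_formula_1} then gives $-\chi(\partial K)\le 0$, contradicting the boundary component of genus at least $2$ forced by $\Omega(\Gamma)\neq\emptyset$. Be aware that this independence is an input about the curve system, not a consequence of simplicity alone, and some such input is unavoidable. Two disjoint, non-isotopic curves on $\Sigma$ can be freely homotopic in $\mc H$; for instance, take $\alpha$ and the band sum of a parallel copy of $\alpha$ with a meridian disjoint from it. Their images are then conjugate for every $\rho$, so the three parabolics occupy at most two cusps.

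Two further repairs are needed even granting Step~2.

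First, rank~$2$ cusps are never treated. Step~3 assumes each parabolic contributes an annulus, but any of them might lie in a $\IZ^2$ subgroup. The paper handles this separately (its case~1). A torus component plus a component facing $\Omega(\Gamma)$ means $K$ is not a handlebody. \zcref{lem:rank_bound} then leaves exactly one other boundary component. The strict inequality of \zcref{thm:bers_area_formula_1} forces that component to have genus at most $1$, and such a component cannot face $\Omega(\Gamma)$.

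Second, placing six punctures on $\Omega(\Gamma)/\Gamma$ presupposes that every rank~$1$ cusp is doubly cusped, i.e.\ that the adjacent ends are geometrically finite. That is exactly what Step~4 is meant to prove. Instead, do the count on $\partial K$ relative to the parabolic locus of a relative compact core, where there is one annulus per rank~$1$ cusp, not a pair. The three annuli give six boundary circles on complementary pieces of negative Euler characteristic, whatever the ends are. Hence $-\chi(\partial K)\ge 2$, equality holds in \zcref{thm:bers_area_formula_1}, and every piece is a thrice-punctured sphere, which cannot face a degenerate end. You should also drop the identification of these pieces with $\Sigma\setminus(\alpha\cup\beta\cup\gamma)$: curves homotopic in $\mc H$ need not be isotopic on $\Sigma$, and the identification is not needed.

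With these changes, Steps~3--4 give a more self-contained alternative to the paper's citation of Theorem~III of Keen--Maskit--Series. They only apply, however, once Step~2 has been established.
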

\begin{proof}
  We will split into cases. Set $ M  = \IH^3/\Gamma $.
  \begin{enumerate}
    \item \textit{$\Gamma$ has a rank $2$ parabolic subgroup.} Since $ \Omega(\Gamma) \neq \emptyset $, $ K $ has at least $2$ ends so is not a handlebody.
          By \zcref{lem:rank_bound} we see that it has exactly $2$ ends. Let $g$ be the genus of the end which is not the known torus end. Then by
          \zcref{thm:bers_area_formula_1} we know $ -\chi(\partial M) = 2g-2 < 2  $. This means $ g = 0 $ or $ g = 1 $. Both of these situations would contradict $ \Omega(\Gamma) \neq \emptyset $.
    \item We can therefore assume that $ \Gamma $ has only rank $1$ parabolic subgroups and split into two subcases depending on whether or not the three parabolics $ \rho(\alpha) $, $ \rho(\beta)$,
          and $\rho(\gamma) $ lie in $3$ distinct maximal parabolic subgroups or not.
      \begin{enumerate}
        \item \textit{The three parabolics lie in distinct subgroups.} By \zcref{thm:bers_area_formula_2}, there is no elliptic locus. Hence we are in the setting of Theorem~III of
              Keen, Maskit, and Series~\cite{keen91} and $ \Gamma $ is a maximally cusped group.
        \item \textit{The three parabolics do not lie in distinct subgroups.}
              Without loss of generality, suppose that $ \rho(\alpha) $ and $ \rho(\beta) $ lie in the same rank $1$ parabolic subgroup, say generated by $ \rho(\eta) $ where $ \eta \in F\{X,Y\} $.
              Since $ \alpha $ and $ \beta $ are simple closed curves, they define primitive elements of $ H_1(\Sigma) $ (i.e. have coprime coordinates in the usual basis).
              Since $ \ab (F\{X,Y\}) = H_1(\mc{H}) $ is a submodule of $H_1(\Sigma)$, the images $ [\alpha] $ and $ [\beta] $ are also primitive in $ H_1(\mc{H}) $. Since they are linearly
              independent, they form a basis. But the map $ \ab (F\{X,Y\}) \to \ab (\Gamma) $ sends these two linearly independent elements into the same rank one subspace $ \IZ [\rho(\eta)] $.
              Hence $ r(\ab \Gamma) < r(\ab F\{X,Y\}) = 2 $, so there is a strict inequality in \zcref{thm:bers_area_formula_1}: $-\chi(\partial K) < 2 $. But
              since $ \Omega(\Gamma) \neq \emptyset $, $ K $ has at least one boundary component of genus at least $2$ and so $ -\chi(\partial K) \geq 2$, giving a contradiction.
              \qedhere
      \end{enumerate}
  \end{enumerate}
\end{proof}

\section{Embeddings of pretzel knots on a genus two surface}\label{sec:pretzel}
We can deduce very little if $ \Omega(\rho(\pi_1(\mc{H}))) $ is empty. In this section we will show
that such representations can occur.

In order to give an explicit example, we will need a coordinate system on free group representations~\cite[Definition~3.2]{elzenaar25h}.
\begin{defn}
  We let $ \mc{X} $ be the character variety of representations $ F\{X,Y\} \to \PSL(2,\IC) $, where $ F \{X,Y\} $ is the
  free group on the symbols $ X $ and $ Y $. Then $ \mc{X} $ is parameterised by trace parameters by taking the matrices
  \begin{displaymath}\renewcommand*{\arraystretch}{1.5}
    X = \begin{bmatrix} \frac{1}{2}(t_X + i t_{XY}) & -\frac{1}{2} (t_X + v) \\ -\frac{1}{2} (t_X - v) & \frac{1}{2}(t_X - i t_{XY}) \end{bmatrix}\;\text{and}\;
    Y = \begin{bmatrix} \frac{t_Y}{2} - i & \frac{t_Y}{2} \\ \frac{t_Y}{2} & \frac{t_Y}{2} + i \end{bmatrix}
  \end{displaymath}
  where $ v $ satisfies $ v^2 = 4 - t_{XY}^2 $. Here $ \tr X = t_X $, $ \tr Y = t_Y $, and $ \tr XY = t_{XY} $.
\end{defn}

\begin{notation}
  For the remainder of this section we will write $ x \coloneq X^{-1} $, $ y \coloneq Y^{-1} $.
\end{notation}

Let $ \mf{P}(a,b,c) $ be the pretzel link with $ a $, $ b $, and $ c $ tangles.
The link has a natural embedding onto the genus $2$ surface $\Sigma$ given by winding around the three `columns'. It is known
that a sufficient condition for $ \mf{P}(a,b,c) $ to be a knot with tunnel number $1$ is that $ b = 2 $ and $ a \equiv c \equiv 1 \pmod{2} $, \cite[Theorem~2.2, condition (2)]{morimoto96}.
The tunnel location is indicated in \zcref{fig:pretzel_braids_1}. Even when $ b > 2 $ we can draw the arc $ \tau $ on the surface and find a system
of three dual curves, though the $\theta$-curve will no longer be isotopic to an unknotted handlebody in $ \IS^3 $.

\begin{figure}
  \labellist
  \small\hair 2pt
  \pinlabel {$a$ half-twists} [r] at 0 155
  \pinlabel {$c$ half-twists} [l] at 366 155
  \pinlabel {$\tau$} [r] at 283 279
  \endlabellist
  \centering
  \includegraphics[width=.33\textwidth]{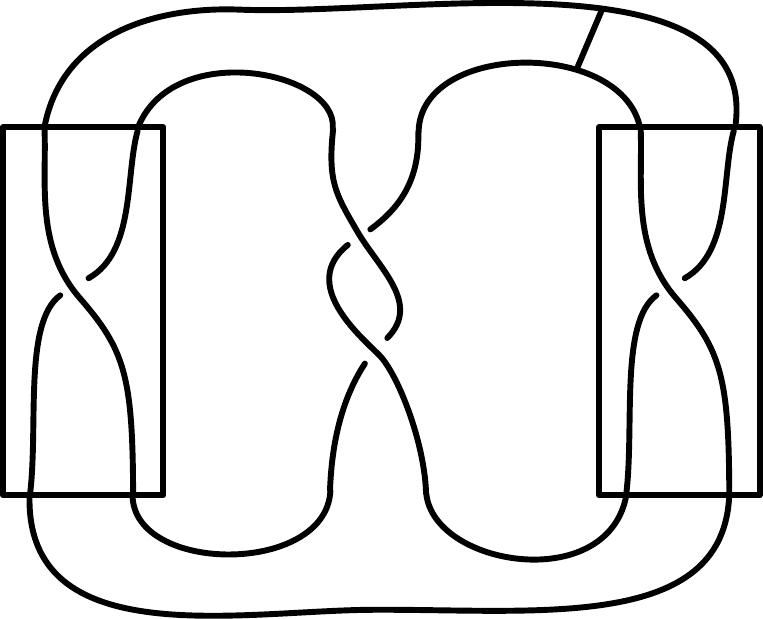}
  \caption{The link $ \mf{P}(a,2,c) $ and the unknotting tunnel $ \tau $.\label{fig:pretzel_braids_1}}
\end{figure}

\begin{lem}\label{lem:pretzel_dual}
  If $ a,c \in \IN $ are odd, and $ b > 2 $ is even, then a system of dual curves to the $ \theta$-graph $ \mf{P}(a,b,c) \union \tau \subset \Sigma $ is given by the words
  \begin{displaymath}
    X^{(a-1)/2} (Xy)^{b/2}, \; X^{(a-1)/2} (Xy)^{b/2} y^{(c-1)/2} (yX)^{b/2}, \; y^{(c-1)/2} (yX)^{b/2}.
  \end{displaymath}
\end{lem}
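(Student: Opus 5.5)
We will prove the lemma by a direct topological computation: fix an explicit model of $\Sigma$ with handle cores $X$ and $Y$, draw the $\theta$-graph $\mf{P}(a,b,c)\union\tau$ on it, pick three disjoint simple closed curves each meeting exactly one arc of the graph transversely in one point, and read off their words in $\pi_1(\mc{H}) = F\{X,Y\}$.

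\emph{The model.} Take $\Sigma$ to be the boundary of a regular neighbourhood of the three-column ``pretzel'' spine in \zcref{fig:pretzel_braids_1}. The left and right columns are the two handles. We choose $X$ to be the core loop that goes up the left column and returns down the middle column. We choose $Y$ similarly for the right column, oriented so that travelling up the middle column and then down the right column reads $y = Y^{-1}$. The link $\mf{P}(a,b,c)$ winds around the three columns, giving $a$, $b$, and $c$ half-twists. The tunnel $\tau$ is the short arc at the top joining the two strands, as in the figure. Removing the two vertices of the $\theta$-graph leaves three arcs: an arc $e_1$ that travels through the $a$-twists and half of the $b$-twists, an arc $e_3$ that travels through the $c$-twists and the other half, and the arc $e_2$ through $\tau$. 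Because $a$ and $c$ are odd and $b$ is even, this is exactly how the strands connect, and the same parity argument as in the $b=2$ case of~\cite{morimoto96} applies.

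\emph{Constructing the dual curves.} For each arc $e_i$ we take a curve $\delta_i$ that cuts around the sub-band of $\Sigma$ carrying $e_i$. Concretely, $\delta_1$ is the boundary of a regular neighbourhood, on $\Sigma$, of the complementary arc $e_2 \union e_3$. Equivalently, it is a meridian-type loop that crosses $e_1$ once and otherwise follows $e_1$ alongside. We then read the word of $\delta_1$ by recording its passages through the handles:
\begin{itemize}
\item each full twist in the left column contributes a factor $X$, and the $a$ half-twists give $(a-1)/2$ full twists plus one half-twist;
\item each full twist of the middle column, where the curve passes alternately through the left and right handles, contributes $Xy$, giving $(Xy)^{b/2}$.
\end{itemize}
This yields $X^{(a-1)/2}(Xy)^{b/2}$. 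The right-hand curve is obtained by the left-right reflection symmetry of the picture, which interchanges $X$ with $y$ and reverses the order of letters in the middle column; this gives $y^{(c-1)/2}(yX)^{b/2}$. The third curve, dual to $\tau$, separates off a neighbourhood of $e_1\union e_3$. Hence it is homotopic to the band-sum of the other two curves, namely $X^{(a-1)/2}(Xy)^{b/2}\,y^{(c-1)/2}(yX)^{b/2}$.

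\emph{Checks.} We then verify three things:
\begin{enumerate}
\item the three curves are disjoint and simple, which holds by construction since they are boundaries of disjoint neighbourhoods;
\item each curve meets exactly one arc, and meets it once;
\item the basepoint conventions are consistent, so that the three words are taken as cyclic words up to conjugacy.
\end{enumerate}
As a sanity check we specialise to $b=2$, where the words should agree with the known tunnel-number-one presentation; for example $\mf{P}(1,2,1)$ should give $(Xy)$, $(Xy)(yX)$, and $(yX)$. We can also check \zcref{ex:whitehead_short}-style computations numerically.

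The main obstacle is the bookkeeping of the middle column. We must get the orientation right (whether each full twist reads $Xy$ or $yX$, and whether it reads $Xy$ or $Yx$), and justify that the half-twist parity contributes no extra letter. I would handle this by first computing the case $a=c=1$, $b=4$ by hand on a careful figure, and then inducting: adding a full twist to a column inserts one period of the corresponding factor.
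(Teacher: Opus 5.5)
There is a genuine gap. The curves you construct do not form a dual system, and the word computation, which is the whole content of the lemma, is asserted rather than derived.

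\emph{The curves are not a dual system.} Both $e_2$ and $e_3$ join the two trivalent vertices, so $e_2\union e_3$ is a closed loop, not an arc. The boundary of its regular neighbourhood on $\Sigma$ is therefore a pair of push-offs of that loop, and these meet $e_1$ only near the vertices. There are two cases.
\begin{enumerate}
\item If $e_1$ leaves the loop on the same side at both vertices, one push-off meets $e_1$ twice and the other misses it. Neither is dual to $e_1$.
\item If $e_1$ leaves on opposite sides, each push-off does meet $e_1$ once. But then the regular neighbourhood of the $\theta$-graph in $\Sigma$ is a one-holed torus in which $e_1\union e_3$ and $e_2\union e_3$ form a homology basis. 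Their push-offs therefore have algebraic intersection $\pm 1$ and cannot be disjoint.
\end{enumerate}
Your check (1) is false as stated, since the neighbourhoods of $e_2\union e_3$ and $e_1\union e_3$ both contain $e_3$. Your alternative description of $\delta_1$ (``follows $e_1$ alongside'') describes yet another curve. The band-sum step, which makes the third word literally the product of the other two, would need an explicit band and basepoints.

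\emph{The word rules are unproved.} The reading-off rules are exactly what must be proved, as you acknowledge: a full twist of the left column gives $X$, one of the middle column gives $Xy$, and the odd half-twist gives nothing.

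\emph{The missing step in your fallback.} Your fallback, computing one case by hand and inducting by adding full twists, is the paper's strategy. What is missing is the step that makes the induction valid.
\begin{itemize}
\item The paper draws an explicit dual system for $\mf{P}(1,2,1)$.
\item It observes that adding two half-twists to a column is effected by a Dehn twist of $\Sigma$ about that column's meridian circle, which can be chosen to miss $\tau$.
\item This twist is a self-homeomorphism of $\Sigma$ carrying $\mf{P}(a,b,c)\union\tau$ to the $\theta$-graph with that column's parameter increased by $2$. Hence it carries a dual system to a dual system, and disjointness and the single transverse intersections are preserved automatically.
\item The new words are obtained from the old ones by inserting the column word ($X$, $Xy$ or $y$, suitably conjugated) at each crossing of a dual curve with the twisting circle.
\end{itemize}
Applying $(a-1)/2$, $b/2-1$ and $(c-1)/2$ such twists yields the stated words. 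This route needs neither the neighbourhood construction nor any argument about the leftover half-twist.
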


\begin{figure}
  \centering
  \includegraphics[width=.6\textwidth]{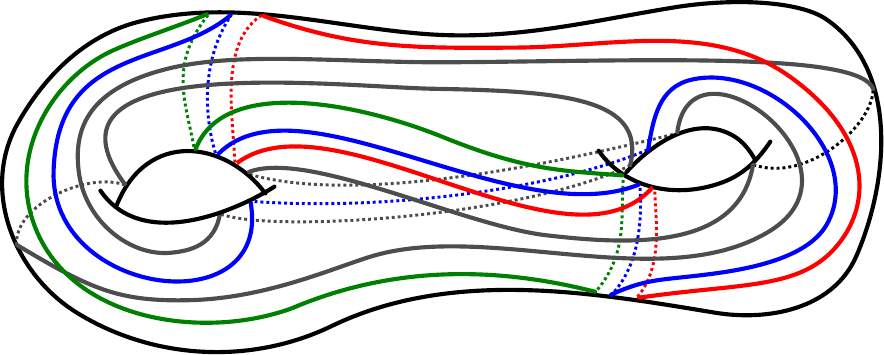}
  \caption{The embedding of $ \mf{P}(1,2,1) $ on $ \Sigma $ (black) and three dual curves.\label{fig:pretzel_braids_2}}
\end{figure}

\begin{proof}
  The embedding is obtained from the system in \zcref{fig:pretzel_braids_2} (where $ a = 1 $, $ b = 2 $, $ c = 1 $) by doing $ (a-1)/2 $, $ b/2-1 $, and $ (c-1)/2 $ Dehn twists
  around the appropriate columns of the surface. The result follows by induction.
\end{proof}

\begin{lem}\label{lem:no_tun_num_1_gps}
  Let $ \alpha, \beta, \gamma $ be the three curves given in \zcref{lem:pretzel_dual} arising from a pretzel link with $ b = 2 $. If $ \rho \in \Par(\alpha,\beta,\gamma) $
  is discrete, non-elementary, and non-Fuchsian, then $ \rho(\alpha) $, $ \rho(\beta) $, and $ \rho(\gamma) $ are all parabolic.
\end{lem}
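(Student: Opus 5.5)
We argue by contradiction. Suppose $\rho \in \Par(\alpha,\beta,\gamma)$ is discrete, non-elementary and non-Fuchsian. Since $\tr^2 = 4$ forces each of $\rho(\alpha)$, $\rho(\beta)$, $\rho(\gamma)$ to be either trivial or parabolic, it suffices to show that none of them can be trivial. First simplify the words of \zcref{lem:pretzel_dual} in the case $b = 2$. Write $n = (a+1)/2$ and $m = (c+1)/2$. Then
\begin{displaymath}
  \alpha = X^{n} y,\qquad \gamma = y^{m} X,\qquad \beta = X^{n} y^{m+1} X .
\end{displaymath}
Here $\beta$ is conjugate to $X^{n+1}y^{m+1}$. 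We treat the three curves in turn.

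If $\rho(\alpha) = 1$ then $\rho(Y) = \rho(X)^n$, so $\Gamma$ is cyclic, hence elementary. The same holds if $\rho(\gamma)=1$, since then $\rho(X) = \rho(Y)^m$. Both cases contradict the hypotheses.

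The substantive case is $\rho(\beta) = 1$, i.e.\ $\rho(X)^{n+1} = \rho(Y)^{m+1} =: Z$.
\begin{enumerate}
  \item \emph{$Z$ is trivial.} The element $Z$ commutes with both generators, so it is central in $\Gamma$. A non-elementary Kleinian group has trivial centre: the centraliser of any nontrivial element of $\PSL(2,\IC)$ is elementary, since it preserves that element's fixed point set. Hence $\rho(X)^{n+1} = \rho(Y)^{m+1} = 1$.
  \item \emph{$X$ and $Y$ are elliptic with real traces.} Neither $\rho(X)$ nor $\rho(Y)$ is trivial, since otherwise $\Gamma$ would be cyclic. So both are elliptic of finite order, with real traces of the form $2\cos(\pi k/p)$.
  \item \emph{All generating traces are real.} Since $\rho(\alpha) = \rho(X)^n\rho(Y)^{-1} = \rho(X)^{-1}\rho(Y)^{-1}$ is parabolic, $\tr \rho(XY) = \pm 2$ is real. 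The character of $\rho$ is determined by $(\tr X, \tr Y, \tr XY)$, so all traces in $\Gamma$ are real.
  \item \emph{$\Gamma$ is Fuchsian.} By the classical dichotomy, an irreducible representation with real character is conjugate into $\PSL(2,\mathbb{R})$ or into $\mathsf{PSU}(2)$. Irreducibility holds because $\Gamma$ is non-elementary. The second option is excluded because $\Gamma$ contains a parabolic element. Hence $\Gamma$ is conjugate into $\PSL(2,\mathbb{R})$, and being discrete it is Fuchsian; it is in fact a $(n+1, m+1, \infty)$ triangle group or a finite-index relative. This contradicts the hypothesis.
\end{enumerate}

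The main obstacle is this third case. We must justify carefully that a real character forces the group into $\PSL(2,\mathbb{R})$, and handle the degenerate possibilities (orders dividing $n+1$ or $m+1$, or a generator being trivial). If the real-character argument proves delicate, a fallback is to conjugate explicitly: put the parabolic $\rho(XY)$ at $\infty$ and use the trace parameters of the previous definition to check that both generators have real entries.
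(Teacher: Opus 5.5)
Your proof is correct, and it is more complete than the paper's. Both proofs use the same three-way split. Your treatment of $\rho(\alpha)=1$, of $\rho(\gamma)=1$, and of $\rho(\beta)=1$ with $Z\neq 1$ is the paper's idea: the generators share the fixed point set of a nontrivial element, so the group is elementary.

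The difference is the subcase $Z=1$. The paper asserts that $\rho(X)^{n+1}\rho(Y)^{-(m+1)}=1$ by itself forces $\rho(X)$ and $\rho(Y)$ to commute. This is false when $\rho(X)^{n+1}=\rho(Y)^{m+1}=1$. For $8_5$, the $(3,3,\infty)$ triangle group is a counterexample: it is an index two subgroup of $\PSL(2,\mathbb{Z})$, generated by two order $3$ elliptics with parabolic product. It lies in $\Par(\alpha,\beta,\gamma)$ with $XXyyyX\mapsto 1$, and it is non-elementary.

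Your real-character argument is exactly what disposes of this case, and it is the only point at which the non-Fuchsian hypothesis is used. The paper's proof never invokes that hypothesis, which is a symptom of the omission.

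The step you flag as the main obstacle is standard: an irreducible $\mathsf{SL}(2,\IC)$-representation with real character is conjugate into $\mathsf{SL}(2,\mathbb{R})$ or $\mathsf{SU}(2)$. Since the group is free, you may lift $\rho$ to $\mathsf{SL}(2,\IC)$ first, which removes the sign ambiguity of $\PSL$-traces. So no explicit conjugation is needed, and the closing identification with a triangle group is superfluous.

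Your exponents $n=(a+1)/2$, $m=(c+1)/2$ are the right ones: they give $XXy$, $yyX$, $XXyyyX$ for $8_5$. The paper's $(a-1)/2$, $(c-1)/2$ is off by one, which affects neither argument.
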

\begin{proof}
  When $ b = 2 $, the three words are $ X^n y $, $ y^m X $, and $ X^n y^{m+1} X $ where $n = (a-1)/2 $ and $ m = (c-1)/2 $. If $ \rho(X^n y) = 1 $ (resp. $ \rho(y^m X) = 1 $),
  then $ \rho(Y) = \rho(X)^n $ (resp. $ Y^m = X $) and in particular $ X $ and $ Y $ share their entire fixed point sets and so commute~\cite[\S I.D.3]{maskit}. Similarly,
  if $ \rho(X^n y^{m+1} X) = 1 $ then (conjugating by $X$) $ \rho(X)^{n+1} \rho(y)^{m+1}) = 1 $ and again $ X $ and $ Y $ commute. In particular $ \langle X, Y \rangle $
  is elementary.
\end{proof}

\begin{figure}
  \centering
  \includegraphics[width=.32\textwidth]{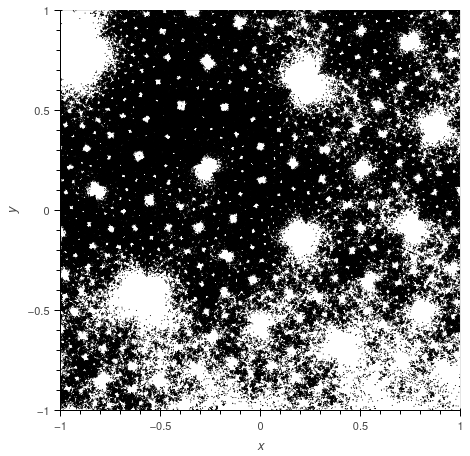}\hspace{.2cm}%
  \includegraphics[width=.32\textwidth]{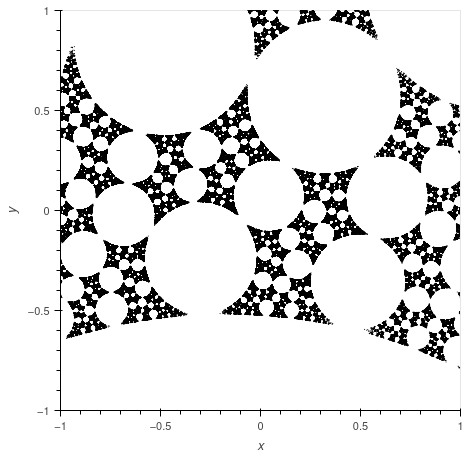}
  \caption{Limit sets of the representations of \zcref{ex:85link}. Left: limit set of \eqref{eq:85finite}. Right: limit set of \eqref{eq:85infinite}.\label{fig:85link_limit}}
\end{figure}

\begin{ex}\label{ex:85link}
  The \href{https://katlas.org/wiki/8_5}{$ 8_5 $ knot} is $ \mf{P}(3,2,3) $, and for this knot the words given by \zcref{lem:pretzel_dual} are $ \alpha = XXy $, $ \beta = yyX $,
  and $ \gamma = XXyyyX $ (compare with \cite[Example~3.7]{elzenaar25h}). Up to conjugacy and permutations of generators, there are exactly two elements of $ \Par(\alpha,\beta,\gamma) $:
  \begin{equation}\label{eq:85finite}
    (t_x, t_y, t_{xy}, v) \approx (-i, -2-i, -2+2i, 2.5440+1.5723i)
  \end{equation}
  and
  \begin{multline}\label{eq:85infinite}
    (t_x, t_y, t_{xy}, v) \approx (-0.7607 - 0.8579i, -0.7607 - 0.8579i,\\ -2.3146 + 2.6103i, 3.0590+1.9751i).
  \end{multline}
  The two limit sets are visible in \zcref{fig:85link_limit}. One immediately conjectures that \eqref{eq:85infinite} is the
  maximal cusp group, and that \eqref{eq:85finite} is a finite covolume discrete group. It is not obvious how to prove this. For instance, \zcref{lem:no_tun_num_1_gps}
  above tells us that the group defined by \eqref{eq:85finite} cannot be the fundamental group of the tunnel number $1$ link group obtained by
  gluing a $2$-handle onto the handlebody along any of the three curves $ \alpha $, $ \beta$, or $ \gamma $. This means that the group cannot arise
  by either of the main constructions of discrete, non-elementary, non-Fuchsian representations in the parabolic locus given in \zcref{sec:motivation}.
\end{ex}

We will confirm in the next theorem that \eqref{eq:85finite} defines a discrete, finite covolume group. This implies that \eqref{eq:85infinite}
is indeed the maximal cusp group associated to the triplet of curves. Rather than giving the proof as directly as possible, we will try to explain
how we went about solving the identification problem in this case as it illustrates how various theoretical results can be used in practice.

\begin{thm}\label{thm:85link_identification}
  The subgroup $G$ of $ \PSL(2,\IC) $ defined by the trace coordinates \eqref{eq:85finite} is discrete and finite covolume.
  Further, $ \IH^3/G$ is an orbifold which is not virtually a tunnel number $1$ link complement.
\end{thm}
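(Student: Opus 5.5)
The first task is to solve the identification problem for the group $G$ given by \eqref{eq:85finite}. I will compute exact generators: the trace coordinates are algebraic, and $t_X=-i$, $t_Y=-2-i$, $t_{XY}=-2+2i$ suggest entries in $\IZ[i]$ or a small extension. Solving the defining polynomial equations of $\Par(\alpha,\beta,\gamma)$ symbolically should give the exact value of $v$ and hence the invariant trace field and quaternion algebra of $G$. I expect the field to be $\IQ(i)$, with all traces algebraic integers. That would place $G$, up to conjugacy and finite index, inside $\PSL(2,\IZ[i])$, so $G$ is discrete and arithmetic. This is consistent with the claim in the summary that $\IH^3/G$ is a $3$-fold quotient of the Borromean rings complement, which is commensurable with the Picard group. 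Since $G$ is finitely generated, discreteness alone does not give finite covolume. For that I will show directly that $G$ has finite index in the appropriate Bianchi group or its normaliser, by running coset enumeration on the images of the standard Picard generators. An alternative is to build a Ford or Dirichlet domain and check that it has finite volume. The expected covolume is one third of that of the Borromean rings, approximately $7.3277/3$.

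Next I will name the orbifold. Using the fundamental domain, or a SnapPy triangulation built from the exact matrices, I will read off the cusp count, the singular locus (the order-$3$ elliptic axes) and the cusp shapes of $\IH^3/G$. I will then exhibit an explicit index-$3$ torsion-free subgroup whose quotient is isometric to the Borromean rings complement. This is the sense in which the orbifold is the $3$-fold quotient. It also confirms that \eqref{eq:85infinite} is the maximal cusp of the triple.

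The final assertion is that no finite cover of $M=\IH^3/G$ is a tunnel number $1$ link complement, and this is the main obstacle. Suppose $\widetilde M\to M$ is a degree $d$ cover with $\widetilde M\cong\IS^3\setminus\mf{k}$ and $\mf{k}$ tunnel number $1$. Then $\pi_1(\widetilde M)$ has rank $2$, so by \zcref{lem:rank_bound} the link $\mf{k}$ has at most $2$ components, and $H_1(\widetilde M)\cong\IZ^{c}$ with $c\le 2$. Moreover $\widetilde M$ is arithmetic and commensurable with the Picard group, since $M$ is. I will attack this with two obstructions.

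\begin{enumerate}
\item \emph{Cusp geometry.} Every cusp of $\widetilde M$ covers a cusp of $M$, so its shape lies in the commensurability class of the cusp shapes of $M$. Since $\mf{k}$ is a link in $\IS^3$, each cusp torus carries a meridian that is the unique filling slope giving $\IS^3$. I will try to use the $6$-theorem together with the cusp geometry to bound the length of that meridian.
\item \emph{Homology.} A cover with at most two cusps and torsion-free $H_1$ of rank at most $2$ is very constrained. I will try to show that every finite-index torsion-free subgroup of $G$ with at most two cusps has either torsion in $H_1$ or $b_1$ too large to be a link in $\IS^3$ with at most two components. To do this I will use the $3$-fold symmetry and transfer arguments on the Borromean rings cover.
\end{enumerate}

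If neither obstruction closes the argument in general, my fallback has two parts. First, I will reduce to the known finiteness results for arithmetic knot and link complements in $\IS^3$ commensurable with the Picard group, using Reid's theorem for the knot case and the two-cusped arithmetic classifications. Second, I will check each candidate against the cusp data of $M$ computed in the second step. \zcref{lem:no_tun_num_1_gps} already disposes of the most natural candidates, namely those obtained by attaching a $2$-handle along $\alpha$, $\beta$ or $\gamma$.
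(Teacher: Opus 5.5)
\paragraph{Steps 1 and 2: a valid but different route.}
Your first two steps are sound and differ from the paper. The arithmetic route to discreteness works and is clean. Since $t_X,t_Y,t_{XY}\in\IZ[i]$, every trace of $G$ lies in $\IZ[i]$. Also $\tr^2 Y=3+4i$ forces the invariant trace field to be $\mathbb{Q}(i)$, so you never need $v=\sqrt{4+8i}$. The Maclachlan--Reid trace criterion then places $G$ inside an arithmetic Kleinian group, so $G$ is discrete. What this does not give is finite covolume or the name of the orbifold. That is why you need the Ford domain or coset enumeration, which first requires conjugating $G$ into a specific maximal group of the commensurability class and writing $X,Y$ as words there. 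The paper gets discreteness, finite covolume and identification at once:
\begin{itemize}
\item it rewrites $G=\langle A,B\rangle$ with $A=xxY$ parabolic and $B=yX$ of order $3$;
\item it uses the Chesebro--Martin--Schillewaert classification and a Farey word search ($W_{3/10}=1$) to guess that $\IH^3/G$ is \texttt{L6a2(3,0)};
\item it verifies the guess by matching the traces of the generators and their product with \textsc{SnapPy}'s holonomy for that orbifold.
\end{itemize}

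\paragraph{Step 3: the genuine gap.}
You read ``not virtually a tunnel number $1$ link complement'' as ``no finite cover of $\IH^3/G$ is a tunnel number one link complement in $\IS^3$''. What you offer for this is a list of things to try, not an argument. The tools you propose (arithmeticity, commensurability with the Picard group, the commensurability class of cusp shapes) are commensurability invariants. But $\IH^3/G$ is commensurable, via the Borromean rings and $\PSL(2,\IZ[i])$, with the Whitehead link complement, which is two-bridge and hence tunnel number one. So none of these invariants can rule anything out. You would have to control actual finite-index subgroups of $G$, and neither the $6$-theorem sketch nor the homology and transfer sketch supplies a mechanism for that. Your fallback appeals to a classification of arithmetic two-component links commensurable with the Picard group that does not exist; Reid's theorem only handles knots. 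As written, the last sentence of the theorem is unproved.

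\paragraph{What the paper proves here.}
The paper proves that sentence only in the sense set up in the introduction: whether the orbifold arises from a tunnel number one link complement by quotienting by symmetries, as in (P2). Its argument is that the threefold manifold cover, which unwraps the order-$3$ cone loop, is the Borromean rings complement. That cover has three cusps, so by \zcref{lem:rank_bound} its fundamental group has rank $3$ (and it has tunnel number $2$). A tunnel number one link group is $2$-generated, so this cover is not such a complement. You already construct this cover in your second step, so you can finish in that sense immediately. The stronger ``no finite cover'' statement you set out to prove is not established by the paper either.
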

\begin{proof}
  Direct computation shows that all three distinguished words are parabolic, and there is an order $3$ elliptic, $ Xy $. We can compute the action of this elliptic by conjugation:
  \begin{displaymath}
    (yX)^{-1} yyX yX = xYyyXyX = xyXyX = x(yX)^2 = x(yX)^{-1} = xxY,
  \end{displaymath}
  which represents the same curve on $ \Sigma $ as $ XXy $. Then
  \begin{displaymath}
    (yX)^{-1} xxY yX = (yX)^2 xxY yX = yXy
  \end{displaymath}
  which represents the same curve as $ yyX $. Finally
  \begin{displaymath}
    (yX)^{-1} yXy yX = yyX.
  \end{displaymath}
  Thus the rank $2$ cusps determined in $\IH^3/G$ by $ XXy $ and $ yyX $ are identical.

  One can check that $ Xy $ commutes with $YYYxxYY $ (which has trace $ -2-3i $), so the subgroup $ \langle Xy, YYYxxYY \rangle $
  represents a singular loop in $ \IH^3/G $ with cone angle $ 2\pi/3 $. Also, $ Xyy $ commutes with $ XXyyyX $, so $ \langle Xyy,XXyyyX\rangle $ is the peripheral
  group of a rank $2$ cusp, the same one represented by $ XXy $ and $ yyX $.

  \begin{figure}
    \centering
    \includegraphics[width=.2\textwidth]{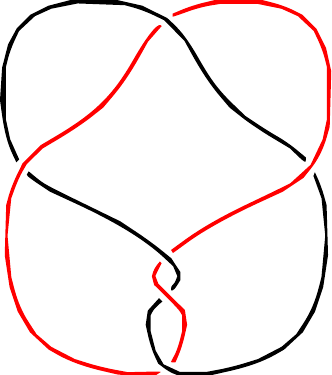}
    \caption{The slope $ 3/10$ $2$-bridge link, Thistlethwaite link \href{https://katlas.org/wiki/L6a2}{$\mathrm{L6a2}$}.\label{fig:85orbifold}}
  \end{figure}

  \begin{figure}
    \centering
    \labellist
    \small\hair 2pt
    \pinlabel {$\infty$} [b] at -2 432
    \pinlabel {$3$} [b] at 110 432
    \pinlabel {unfold to $2\pi/3$} [b] at 444 458
    \pinlabel {fixed axis of order $3$ automorphism} [r] at 245 182
    \endlabellist
    \includegraphics[width=.6\textwidth]{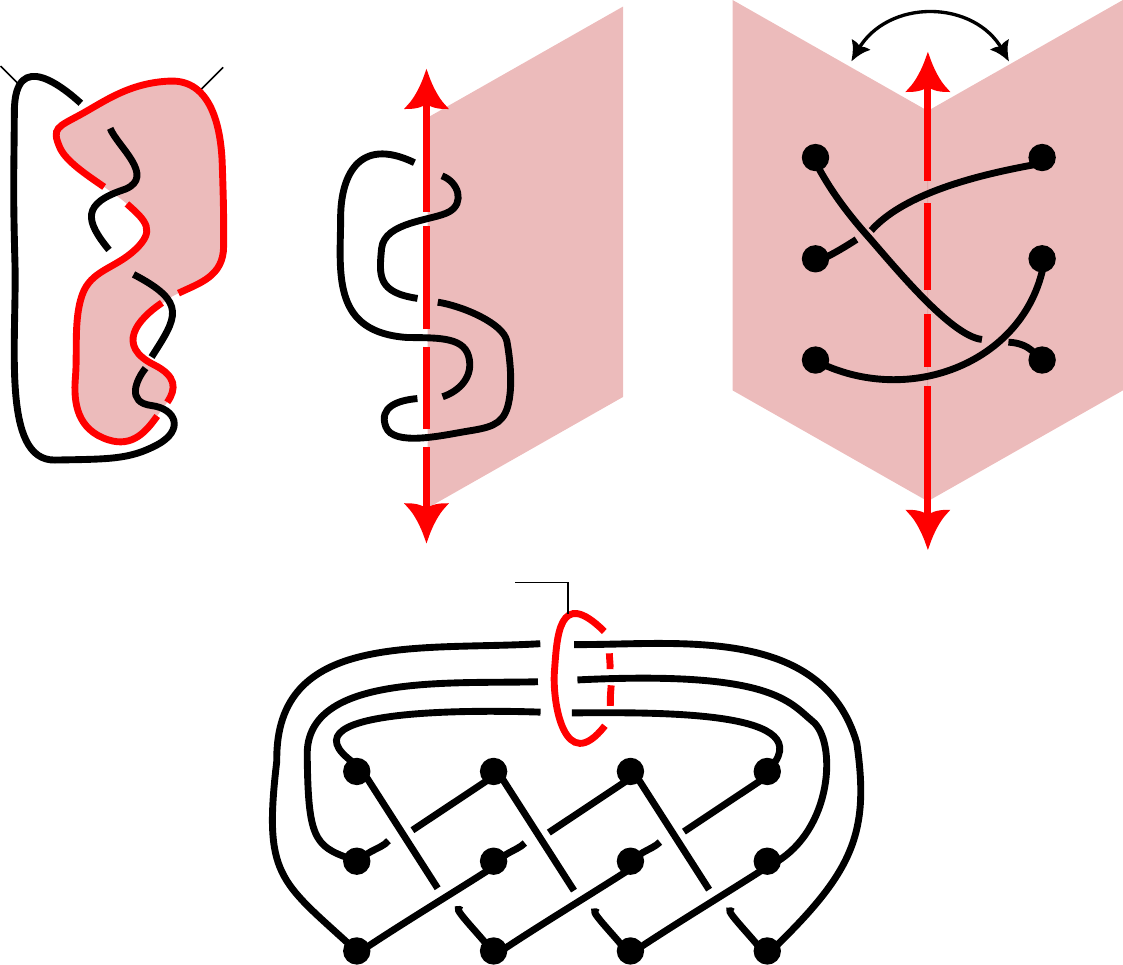}
    \caption{The threefold cover of the $3/10 $ $2$-bridge link orbifold is the Borromean rings.\label{fig:threefoldcover}}
  \end{figure}

  We can therefore guess that $ \IH^3/G $ is a manifold with $2$-generated fundamental group and two singular closed arcs. To confirm this, set $ A = xxY $ and $ B = yX $;
  then $ X = AB $ and $ Y = ABABA $, so $ G = \langle A,B \rangle $. But discrete cofinite groups generated by a pair of elliptics or parabolics have been classified by
  Chesebro, Martin, and Schillewaert~\cite{cms24} so $ G $ must be a Heckoid group, or a 2-bridge link group with one of the components of the link being an order $3$
  cone arc, or the holonomy group of a quotient of such a Heckoid manifold or link orbifold by an automorphism. To check which one, it suffices to iterate through
  the Farey words~\cite[Table~1]{ems22b} until one has trace in $ [-2,2] $. A computer search shows that
  \begin{displaymath}
    W_{3/10} = bAbaBabAbaBaBAbABaBA
  \end{displaymath}
  is the identity in $ G $ and that none of the possible order $2$ elements are order $2$, so $\IH^3/G$ is the orbifold obtained by taking the $2$-bridge link
  of slope $ 3/10 $ and filling one of the components with an order $3$ singular loop, \zcref{fig:85orbifold}. One can verify this by computing the hyperbolic
  structure of this orbifold in \texttt{SnapPy}~\cite{SnapPy} (we have truncated decimals in the output):
  \begin{lstlisting}
    > M=Manifold("L6a2(3,0)")
    > G=M.fundamental_group()
    > G.meridian(0)
    'b'
    > G.meridian(1)
    'bA'
    > G.SL2C('b')
    [-1.0000 - 5.2100e-16*I    0.2588 - 0.9659*I]
    [-0.2588 - 0.9659*I        0.0000           ]
    > G.SL2C('bA')
    [-2.0000 - 1.0000*I    0.7071 + 1.2247*I]
    [-1.2247 - 0.7071*I    4.7200e-16 + 1.0000*I]
    > G.SL2C('bAB')
    [-1.0000 + 1.0000*I    0.7765 - 2.8978*I]
    [-0.9659 + 0.2588*I    1.0000 - 2.0000*I]
  \end{lstlisting}
  Thus the trace-squared of the product of the meridian generators is $ (-i)^2 = -1 $. A direct computation with \eqref{eq:85finite} shows that $ \tr^2 AB = -1 $, and so
  since rank two subgroups of $ \PSL(2,\IC) $ are determined by the set of traces of two generators and their product it follows that the group $G$ is the holonomy group
  of the claimed orbifold.

  One can check directly (\zcref{fig:threefoldcover}) that the threefold manifold cover of the orbifold is the Borromean rings, which is tunnel number $2$ and has rank $3$ fundamental group.
\end{proof}
We remark that in the proof we first use general theory (here, the theory of~\cite{cms24}) to guess what the manifold in question is, but once we have our guess then the verification
is independent of the general theory (we just need to compare the trace parameters of our group to that of the manifold we guess it uniformises)---the identification problem is
one where solutions are hard to find but easy to verify.

Computer experiment shows that finite covolume discrete groups do not appear in the parabolic locus of the curves produced by \zcref{lem:pretzel_dual} very often. However, \zcref{thm:85link_identification}
is not the only example. The following is interesting because it gives a triplet of curves $ \{\alpha,\beta,\gamma\} $ such that there exists some discrete, non-elementary, non-Fuchsian $ \rho \in \Par(\alpha,\beta,\gamma) $
so that all three of $ \rho(\alpha)$, $\rho(\beta)$, and $\rho(\gamma) $ are parabolic, but the image \emph{is} a tunnel number $1$ link group. In this case the Heegaard diagram
defining the link is not $\{\alpha,\beta,\gamma\}$ and so $ \rho $ lies in the intersection of parabolic loci for two genuinely distinct systems of curves. This phenomenon
does not seem to occur in the two-bridge link setting.

\begin{ex}
  The \href{https://katlas.org/wiki/10_46}{$ 10_{46} $ knot} is $ \mf{P}(3,2,5) $, and for this knot the words given by \zcref{lem:pretzel_dual} are $ \alpha = XXy $, $ \beta = yyyX $,
  and $ \gamma = XXyyyyX $. Experimentation suggests that there are exactly two discrete, non-elementary, non-Fuchsian representations in $ \Par(\alpha,\beta,\gamma) $, namely
  \begin{multline}\label{eq:1046finite}
    (t_x, t_y, t_{xy}, v) \approx (-0.5 + 0.8660i, -0.5 - 0.8660i,\\ 2.5 + 0.8660i, 1.2415-1.7439i)
  \end{multline}
  and
  \begin{multline}\label{eq:1046infinite}
    (t_x, t_y, t_{xy}, v) \approx (-0.8414 - 0.7730i, -1.2836 - 0.3640i,\\ -1.5333 + 3.0082i, 3.5231+1.3092i).
  \end{multline}
  We will prove in \zcref{thm:1046_group} below that \eqref{eq:1046finite} is a tunnel number $1$ link group. One can also show that \eqref{eq:1046infinite} is the
  maximal cusp group associated to the system, though this is not trivial---this can be done by constructing a fundamental domain for the action
  of the group on its domain of discontinuity on the Riemann sphere. Beyond these two, there are four other elements of $ \Par(\alpha,\beta,\gamma) $ which all appear
  to be indiscrete. Actually proving this would be a nontrivial task involving iterating through longer and longer words in each group to see if they generate a rank
  two indiscrete subgroup using J\o{}rgensen's inequality~\cite[Theorem~2.17]{matsuzaki_taniguchi}.
\end{ex}

\begin{figure}\vspace{1em}
  \labellist
  \small\hair 2pt
  \pinlabel {$XyyXXXyyXy$} [b] at 230 165
  \pinlabel {$yyXy$} [b] at 318 165
  \pinlabel {$XyyXXX$} [b] at 118 165
  \endlabellist
  \centering
  \includegraphics[width=.7\textwidth]{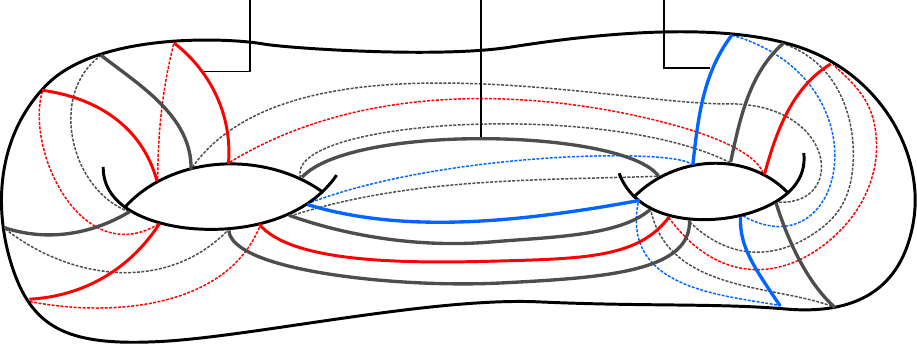}
  \caption{The three new words appearing in the proof of \zcref{thm:1046_group}.\label{fig:m003}}
\end{figure}

\begin{thm}\label{thm:1046_group}
  The group $G$ defined by the trace coordinates \eqref{eq:1046finite} is the holonomy group of a tunnel number $1$ link group.
\end{thm}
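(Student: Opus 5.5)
The plan mirrors the proof of \zcref{thm:85link_identification}: first guess the quotient orbifold by locating extra parabolic and elliptic elements in $G$, then verify the guess by comparing trace parameters. The figure preceding the statement suggests the key new words are $XyyXXX$, $yyXy$ and $XyyXXXyyXy$. I would first check directly from \eqref{eq:1046finite} that these three words all have $\tr^2 = 4$. I would also check that they correspond to three disjoint simple closed curves on $\Sigma$ forming a new dual system to a different $\theta$-graph. The point is that one of these words, presumably the middle one, $yyXy$, should be sent to the identity: we would verify $\rho(yyXy)=1$, or more likely that some word trivial in the handle-addition quotient is trivial. The other two would then be meridians of a two-component (or one-component) link.

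Concretely, the traces $t_x = e^{2\pi i/3}$ and $t_y = e^{-2\pi i/3}$ show that $X$ and $Y$ are elliptic of order $3$, not parabolic. So I would change generators to a pair of parabolics $A, B$ built from $\alpha=XXy$ together with one of the new words. Then I would show $G=\langle A,B\rangle$ by expressing $X$ and $Y$ back as words in $A$ and $B$, just as $X=AB$ and $Y=ABABA$ were found in the previous proof. The guess, based on the filename-like label of the figure, is that $\IH^3/G$ is the figure-eight knot sister or the manifold \texttt{m003}, or rather a tunnel number one link complement with that census name.

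The verification step is then the following. I would compute in \texttt{SnapPy} the holonomy of the guessed link complement, and read off the meridian generators and $\tr^2$ of their product. I would then compare this with $\tr^2 AB$ computed from \eqref{eq:1046finite}. Since a two-generator subgroup of $\PSL(2,\IC)$ is determined up to conjugacy by $\tr A$, $\tr B$ and $\tr AB$ (for non-elementary groups), equality of these three parameters identifies $G$ with the holonomy group. The link has tunnel number $1$ because its fundamental group is generated by two meridians and the new curve system is a genuine Heegaard diagram; failing that, I would cite the census classification of tunnel number one links.

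The main obstacle is finding the right generating pair: one must show that the chosen parabolics, or a parabolic and an elliptic, actually generate all of $G$. One must also identify the correct manifold rather than a finite quotient of it. The latter requires checking that no elliptic elements appear among short words, using the classification of~\cite{cms24} and scanning the Farey words for a relator, as before.
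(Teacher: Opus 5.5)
Your outline has the right overall shape: find new words in $G$, guess the quotient, then confirm by comparing traces of a generating pair. The concrete steps, however, go wrong.

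First, $X$ and $Y$ are not elliptic. The traces $\tr X \approx -0.5+0.866i$ and $\tr Y \approx -0.5-0.866i$ are not real, and an element of $\PSL(2,\IC)$ is elliptic only if its trace is real with $\tr^2<4$. So both are loxodromic.

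Second, $yyXy = Y(yyyX)y$ is a conjugate of $\beta$. It is therefore parabolic, so it cannot be the word that dies. The missing observation is that the \emph{product} of the two new parabolic words is trivial: $(XyyXXX)(yyXy)=1$ in $G$. This relator $XyyXXXyyXy$ is itself a simple closed curve on $\Sigma$, and for it you must check triviality, not merely $\tr^2=4$. Attaching a $2$-handle to $\mc{H}$ along this curve gives a one-cusped manifold $M$ with $\pi_1(M)=\langle X,Y : XyyXXXyyXy=1\rangle$, and $\rho$ factors through it. This construction is also what gives tunnel number one: removing a neighbourhood of the cocore arc of the $2$-handle from $M$ leaves $\mc{H}$. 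Your justification via generation by two meridians is false here, as the next point shows.

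Third, and fatally, you cannot pass to a pair of parabolic generators and then apply \cite{cms24} with a Farey-word scan. In the $8_5$ case this worked because $G$ was generated by a parabolic and an order $3$ elliptic. Here $G\cong\pi_1(M)$ is torsion-free, with abelianisation $\IZ^2/\langle(5,-5)\rangle\cong\IZ\oplus\IZ/5\IZ$. By the classification of \cite{akiyoshi2020classification}, a non-free Kleinian group generated by two parabolics is a two-bridge link group or a Heckoid group. Heckoid groups have torsion, and a finite covolume group is not free. So a torsion-free lattice generated by two parabolics would be a two-bridge link group, whose abelianisation is $\IZ$ or $\IZ^2$. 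Hence no parabolic generating pair exists, and there are no meridian generators to compare against.

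The paper instead matches presentations. Under $A=X$, $B=Yx$, the census relator $abAAbabbb$ of \texttt{m003} becomes a cyclic permutation of $XyyXXXyyXy$. The paper then compares $\tr A$, $\tr BA=\tr Y$ and $\tr ABA=\tr XY$ for this loxodromic pair in the \textsc{SnapPy} holonomy with \eqref{eq:1046finite}. A non-elementary two-generator group is determined by these three traces, so $G$ is the holonomy group of \texttt{m003}. Your guess of \texttt{m003} from the figure label happens to be right, but it is this presentation match that actually justifies it.
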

\begin{proof}
  One first sees that the relation
  \begin{equation}\label{eq:1046_newrelator}
    (XyyXXX)(yyXy) = 1
  \end{equation}
  holds in $ G $. Here $yyXy$ is a conjugate of $ \beta $, and $XyyXXX$ is a parabolic in $ G $ which is distinct from any of the three parabolics $ \alpha, \beta, \gamma $
  even up to conjugacy and taking inverses. These new parabolic words $ yyXy $ and $ XyyXXX $ and the relator word \eqref{eq:1046_newrelator} represent a triplet of distinct
  simple closed curves on the genus two surface, as shown in \zcref{fig:m003}. Gluing a two-handle to the handlebody $ \mc{H} $ along the word \eqref{eq:1046_newrelator}
  produces a manifold $M$ which is the complement of a tunnel number $1$ link in some closed $3$-manifold, and by the Siefert--van Kampen theorem  we see that
  \begin{equation}\label{eq:1046_pres}
    \pi_1(M) = \langle X, Y : XyyXXXyyXy = 1 \rangle.
  \end{equation}

  We can identify $M$ as the manifold \texttt{m003(0,0)} in the \textsc{SnapPy} census of orientable cusped $3$-manifolds~\cite{SnapPy,callahan99,hildebrand89,burton17}: the
  census manifold has fundamental group $ \langle A, B : abAAbabbb \rangle $ which agrees with \eqref{eq:1046_pres} after the change of variables $ A = X $ and $ B = Yx $
  (so $ X = A $, $ Y = BA $, $ XY = ABA $), so the two are isometric by Mostow--Prasad rigidity~\cite[Theorem~3.32]{matsuzaki_taniguchi}. Further, the holonomy representation of the census manifold
  produced by \textsc{SnapPy} satisfies $ \tr A = -0.5 - 0.8660i $, $ \tr BA = 0.5 - 0.8660i $, and $ \tr ABA = -2.5 + 0.8660i $, agreeing with \eqref{eq:1046finite}. Since rank
  two subgroups of $ \PSL(2,\IC) $ are determined by the set of traces of two generators and their product, this shows that $G$ is conjugate to the census manifold holonomy group,
  completing the proof of the theorem.
\end{proof}

\begin{rem}
  The manifold \texttt{m003(0,0)} is a knot complement in a lens space, see \cite[Example~4.4]{kegel24}.
\end{rem}

\sloppy\printbibliography

\end{document}